\theoremstyle{plain}
\newtheorem{thm}{Theorem}
\newtheorem{lem}[thm]{Lemma}
\newtheorem{cor}[thm]{Corollary}
\theoremstyle{definition}
\newtheorem{rem}{Remark}
\newtheorem{quest}{Question}
\newcommand{\R}{\ensuremath{\mathbb{R}}}
\newcommand{\T}{\ensuremath{{\mathcal T}}}
\renewcommand{\epsilon}{\varepsilon}
\begin{document}

\title[Incongruent equipartitions]{Incongruent equipartitions of the plane into quadrangles of equal perimeter}
\author{Dirk Frettl{\"o}h}
\address{Faculty of Technology, Bielefeld University, 33501 Bielefeld, Germany}
\author{Christian Richter}
\address{Institute of Mathematics, Friedrich Schiller University, 07737 Jena, Germany}
\date{\today}

\begin{abstract}
Motivated by a question of R.\ Nandakumar, we show that the Euclidean plane can be dissected into mutually incongruent convex quadrangles of the same area and the same perimeter. As a byproduct we obtain vertex-to-vertex dissections of the plane by mutually incongruent triangles of unit area that are arbitrarily close to the periodic vertex-to-vertex tiling by equilateral triangles.
\end{abstract}

\subjclass[2010]{52C20 (primary); 05B45; 52A38; 52C25 (secondary).}
\keywords{Equipartition; fair partition; tiling; dissection; equilateral triangle; quadrangle}

\maketitle


\section{Introduction}

\subsection{Background and main results}

Sometimes problems in mathematical research are easy to formulate 
and look harmless and natural, but the answer may require a lot of effort.
The mathematical theory of tilings (also known as tesselations) provides many such 
problems. In particular, R.\ Nandakumar asks several such seemingly harmless,
but very intriguing questions about tilings in his blog \cite{nblog1}. 
Some of them have triggered a lot of research recently.
A very fruitful question of Nandakumar is ``can any convex set in the plane
be dissected into $n$ convex pieces with the same area and the same
perimeter?"  The solution to this problem requires fairly 
sophisticated tools from algebraic topology \cite{bbs, bz, kha, nr}. For a survey
see \cite{z2}. This paper is motivated by another of his problems \cite{nblog1}: 

\begin{quest}\label{quest:1}
``Can the plane be tiled by triangles of same 
area and perimeter such that no two triangles are congruent to each other?'' 
\end{quest}
Throughout this paper congruence is meant with respect to Euclidean isometries. In particular, reflections through straight lines are included.
Question \ref{quest:1} was answered in \cite{kpt1} by showing that no such tiling 
exists. Weakening the problem above  by dropping any requirement on the perimeter
makes the problem easy: 
it is not hard to find tilings of the plane by mutually incongruent triangles of unit 
area with unbounded perimeter, see \cite{nblog1}.
Hence Nandakumar asked also the following weaker version of Question \ref{quest:1}. 
\begin{quest} \label{quest:2}
``Can the plane be tiled by triangles of same area, and with
uniformly bounded perimeter, such that no two triangles are congruent to each other? If so, how small can one choose the ratio between the largest and the smallest perimeter among the triangles used?'' 
\end{quest}
The existence of such tilings by triangles with uniformly bounded perimeter was shown 
in \cite{f} (partly) and in \cite{kpt2}. In \cite{fr} even vertex-to-vertex tilings of that kind are presented. Here we improve the last result by showing that the ratio between largest and smallest perimeter can be chosen arbitrarily close to $1$.

\begin{thm} \label{thm:triangles} 
There are vertex-to-vertex tilings of the plane by pairwise incongruent non-equilateral triangles of unit area that are arbitrarily close to the periodic vertex-to-vertex tiling by equilateral triangles of unit area. 
\end{thm}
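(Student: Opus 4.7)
The plan is to construct $\T$ as a small perturbation of the equilateral unit-area tiling $\T_0$, preserving its combinatorial (vertex-to-vertex) structure. Label the vertices of $\T_0$ by $v_{k,j} = j e_1 + k e_2$ for $(k,j) \in \Z^2$, where $e_1, e_2$ generate the triangular lattice. We will seek perturbed positions $\tilde v_{k,j} = v_{k,j} + w_{k,j}$ with each $\|w_{k,j}\|$ small, such that every triangle of the combinatorially identical tiling $\tilde{\T}$ has unit area, no triangle is equilateral, and no two are congruent. Vertex-to-vertex and unit area will be built in by the construction itself; non-equilaterality and pairwise incongruence will follow from a genericity argument.

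The construction will proceed strip by strip. Organize $\T_0$ into horizontal strips $S_k$ between rows $k$ and $k+1$, each built from alternating up- and down-triangles. Inductively, once rows $\le k$ have been placed, each up-triangle of $S_k$ has two known vertices (in row $k$) and one unknown (in row $k+1$), so its unit-area condition is a linear equation confining the new vertex to an affine line $L_j \subset \R^2$. Each down-triangle of $S_k$ has one known and two unknown vertices and yields a bilinear relation between consecutive new vertices. Choosing $\tilde v_{k+1,0}$ freely on $L_0$ supplies a single real parameter $\tau_k$, after which the remaining $\tilde v_{k+1,j}$, $j\in\Z$, are uniquely determined by solving the down-triangle relations iteratively in both directions of $j$. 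Row $0$, which carries no initial area constraint, is chosen non-uniformly in a small neighborhood of $\{v_{0,j}\}$, supplying infinitely many initial parameters that break the translation symmetry of $\T_0$. For fixed rows $\le k$, the triangles of $S_k$ depend algebraically on $\tau_k$, so the conditions ``two given triangles are congruent'' and ``a given triangle is equilateral'' become polynomial equations in $\tau_k$ with only finitely many solutions. Summing over the countably many such conditions (pairs within $S_k$, and pairs of a triangle in $S_k$ with any previously placed one) yields a countable, nowhere-dense set of ``bad'' values, and we choose $\tau_k$ in the dense complement, arbitrarily close to any preferred target.

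The main obstacle will be to keep the cumulative perturbations so small that $\tilde{\T}$ is close to $\T_0$ (locally, and with every tile shape close to equilateral). A first-order analysis around the equilateral solution should yield, within a single strip, a telescoping identity
\[
\xi_n \;=\; \xi_0 + \tfrac{1}{2}\bigl(\delta_n + \delta_{n+1} - \delta_0 - \delta_1\bigr),
\]
where the $\delta_j$ are the perturbations in the previous row, showing that the recursion is neutrally stable in the $j$-direction. The amplification across a single strip is bounded by a constant $C \geq 1$, which one expects, thanks to the $60^\circ$-symmetry of $\T_0$, can be taken arbitrarily close to $1$. Making this stability analysis rigorous beyond the linearization, and verifying that by taking the row-$0$ seed and each $|\tau_k|$ sufficiently small while still lying in the dense set of good values, the cumulative perturbation of every vertex stays below any prescribed $\epsilon > 0$, is the hardest step of the argument.
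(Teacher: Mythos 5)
Your proposal contains two genuine gaps, and the paper's architecture is designed precisely to avoid both. The first is the finiteness claim for the ``bad'' parameter sets. For the union of countably many exceptional sets to be avoidable you need each congruence condition to be a \emph{non-trivial} algebraic equation in $\tau_k$, and this can fail outright: if row $k$ is straight and equally spaced, the linearized (and in fact the exact) unit-area recursion forces $\xi_{j+1}=\xi_j$, so the entire new row is a rigid translate of the unperturbed one and \emph{all} up-triangles of $S_k$ are mutual translates for \emph{every} value of $\tau_k$ --- the ``polynomial'' is identically zero. Making row $0$ non-uniform pushes the problem back but does not solve it; one must exhibit, for every pair of tiles, a parameter value at which they are incongruent. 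That verification is the technical core of the paper (the explicit computation at $y_0=1/\sqrt3$ in Lemma~\ref{lem:1/sqrt(3)}, the rational-function argument of Lemma~\ref{lem:F(i,j,i',j')}, and the vertical-width case analysis of Lemma~\ref{lem:Ffinite}), and your plan does not engage with it. Relatedly, reflection-congruent pairs cannot be excluded by the same one-parameter genericity inside a symmetric configuration; the paper separates translation/point-reflection congruence ($\simeq$) from full congruence ($\cong$) and uses a generic shear (Lemma~\ref{lem:kong-endlich}) to destroy exactly the congruences that the strip parameter cannot.

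The second and more serious gap is the one you flag yourself: control of the cumulative perturbation over infinitely many rows. Your telescoping identity gives at best neutral stability of the \emph{linearization}; even granting that the linear amplification per strip can be normalized to $1$ by absorbing $\frac12(\delta_0+\delta_1)$ into the free constant, the second-order errors of each row are of size $O(\varepsilon^2)$ and accumulate to $O(m\varepsilon^2)$ after $m$ rows, which is unbounded for fixed $\varepsilon$. No argument is offered that the perturbations decay, and without one the tiling need not remain close to $\T_0$ (or remain non-degenerate at all). The paper sidesteps this entirely by a different decomposition: each strip is two triangle-rows tall with \emph{straight} upper and lower boundaries, all distortion is confined to the interior zigzag row and is shown to decay with summable deviations (Lemmas~\ref{lem:auxiliary_estimates} and \ref{lem:small_deviations}), and the plane is filled by stacking exact sheared copies of this single strip. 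Nothing propagates from strip to strip except the shear offsets $\mu_n$, which are chosen summable. Until you either prove genuine decay of your row-to-row recursion or restructure the construction so that errors do not propagate, the proposal does not yield a proof.
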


R.\ Nandakumar \cite{nblog1} proposed to consider the above questions not only for dissections into triangles, but also into convex quadrangles, pentagons or hexagons instead. During the last few years several results where obtained in this direction \cite{f,fr}, but all of those are about variations of Question \ref{quest:2}. Our main result gives a positive answer to Question~\ref{quest:1} for quadrangles. This seems to be the first positive result on partitions of the plane into incongruent convex $n$-gons of equal area and perimeter.

\begin{thm} \label{thm:quadrangles}
There are tilings of the plane by pairwise incongruent convex quadrangles of the same area and perimeter.
\end{thm}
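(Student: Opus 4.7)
\textit{Proof plan.} I would realize the tiling as a controlled perturbation of the periodic tiling by $60^\circ$--$120^\circ$ rhombi of area $2$ (the pairing of the unit-area equilateral triangle tiling), with reference perimeter $P_0 = 8/\sqrt[4]{3}$. The deformed tiles are to be convex quadrangles of area $2$ and perimeter $P_0$, pairwise incongruent.

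The construction would proceed strip by strip, matching the rows of rhombi. Each strip is bounded above and below by broken polygonal lines whose vertices are vertices of the tiling; inside it, the strip is cut by slanting edges into convex quadrangles. Given the two left vertices of a strip, the remaining vertices are determined by a propagation: each subsequent quadrangle's two new vertices are fixed by imposing area $=2$ and perimeter $=P_0$. Near the reference rhombus this is a two-by-two implicit system with invertible Jacobian, so the implicit function theorem gives a unique continuation; iterating yields an infinite row of near-rhombic quadrangles, parametrized by the initial data and by the shape of the strip's lower boundary.

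Strips are then stacked by identifying the upper boundary of strip $j$ with the lower boundary of strip $j+1$, choosing the initial data of strip $j+1$ compatibly with the output of strip $j$. Varying these initial data from row to row produces a continuous family of candidate tilings. For pairwise incongruence, note that for any two fixed tiles $Q$, $Q'$ the set of parameter values making $Q \cong Q'$ is cut out by finitely many algebraic equations and is therefore nowhere dense in the parameter space; a countable-avoidance argument over all pairs of tiles then selects parameters for which the entire tiling is pairwise incongruent, while a smallness condition on the perturbation preserves convexity throughout. Bisecting each resulting quadrangle by an area-bisecting diagonal finally yields the triangle tiling of Theorem~\ref{thm:triangles}.

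The main obstacle I foresee is the degree-of-freedom balance. In a patch with $N$ vertices one counts $2N$ coordinate parameters against $2N$ area-plus-perimeter constraints, so the reference configuration looks rigid and a naive propagation might collapse onto an isometric copy of the regular tiling. The crux is therefore to exhibit genuine residual freedom — whether by exploiting a rank drop of the constraint Jacobian at the highly symmetric rhombic tiling, by allowing non-horizontal strip boundaries, by letting strip widths vary, or by introducing occasional combinatorial defects in the pattern. Identifying and quantifying this residual freedom and then propagating it across infinitely many strips, while maintaining convexity and avoiding any accidental congruences, is the technical heart of the proof.
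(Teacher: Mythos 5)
Your plan founders exactly where you yourself locate the difficulty: the degree-of-freedom balance is not a deferrable technicality but the central obstruction, and none of the four escape routes you list is actually shown to work. In a doubly infinite quadrangle tiling with roughly one tile per vertex (as in the rhombic pairing), imposing area \emph{and} perimeter on every tile is a critically determined system; the freedom you see inside one strip (two new vertices, i.e.\ four coordinates, against two constraints per new tile) is consumed when the upper boundary of that strip must serve as the lower boundary of the next. Without exhibiting concrete residual parameters that survive the stacking, the countable-avoidance argument for incongruence has nothing to act on, and the whole construction may indeed collapse onto an isometric copy of the periodic rhombic tiling. The closing claim that an ``area-bisecting diagonal'' of each quadrangle recovers Theorem~\ref{thm:triangles} is also unsupported: a diagonal of a convex quadrangle does not in general bisect its area, and even when it does, the two resulting triangles need not be incongruent from each other or from triangles in other tiles.

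The paper resolves the freedom problem by decoupling the two constraints, and this is the idea your proposal is missing. It first builds a vertex-to-vertex tiling by mutually incongruent, non-equilateral, nearly equilateral triangles of equal area \emph{without any perimeter condition}; dropping the perimeter constraint leaves a free real parameter $y_0$ for the distortion of a strip and a free shear parameter $\mu_n$ for each stacked copy, which is exactly what the countable-avoidance arguments (Lemmas~\ref{lem:F(i,j,i',j')}, \ref{lem:Ffinite} and \ref{lem:kong-endlich}) act on. The perimeter is then equalized \emph{locally}: each triangle is cut into three convex quadrangles as in Figure~\ref{fig1}, with three free subdivision parameters $\alpha,\beta,\gamma$ matching the three equal-perimeter equations, solvable near the equilateral configuration by the implicit function theorem because the relevant Jacobian is nonsingular (Lemma~\ref{lem:subdivision}(i)). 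Incongruence of the quadrangles is then inherited from incongruence of the triangles, since the subdivision is rigid in both directions: the three quadrangles of a non-equilateral triangle are mutually incongruent, and the shape of any one quadrangle determines its parent triangle up to isometry. If you want to salvage your approach, the lesson is that the perimeter constraint should be discharged inside a bounded local gadget rather than propagated globally across strips.
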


The remainder of the present paper is organized as follows: After explaining some notations, we prove Theorem~\ref{thm:triangles} in Section~\ref{sec:triangles}. Our approach is close to Section~2 of \cite{fr}, but requires more technical effort. In Section~\ref{sec:quadrangles} Theorem~\ref{thm:quadrangles} will be inferred from Theorem~\ref{thm:triangles} by a procedure of subdividing.

\subsection{Notation}

A {\em tiling (partition, dissection, tesselation)} of a set $A \subseteq \R^2$ 
is a collection $\{T_1, T_2, \ldots \}$ 
of compact sets $T_i \subseteq \R^2$ (the {\em tiles}) that is a packing 
(i.e., the interiors of distinct tiles are disjoint) as well as a 
covering of $A$ (i.e., the union of the tiles equals $A$).
In general, shapes of tiles may be pretty complicated, but for
the purpose of this paper tiles are always convex polygons. 
A tiling is called {\em vertex-to-vertex} if the intersection of any two 
tiles is either an entire edge of both tiles, or a vertex of both tiles, or 
empty. Hence a vertex-to-vertex tiling is a polytopal cell decomposition
in the sense of \cite{z1}. 
A tiling is called \emph{periodic} if there are two linearly independent vectors in 
$\mathbb{R}^2$ such that the tiling is invariant under the translations by these vectors.
An \emph{equipartition} of the plane is a tiling of $\R^2$ such that
all tiles have the same area. One speaks of a \emph{fair partition} if all tiles 
have the same area and the same perimeter \cite{dmo}. 
We refer to \cite{gs} as a standard reference work on tilings.

Two tilings are close to each other if there is a bijection between them such that the Hausdorff distance between correspondent tiles is uniformly small. Here we use an equivalent notion that is slightly easier to handle: two tilings by triangles are called \emph{$\varepsilon$-close} to each other if there is a bijection between them such that the absolute differences between respective coordinates of respective vertices of respective triangles does not exceed $\varepsilon$.  

The symbol $\cong$ is used for congruence under Euclidean isometries including reflections. The standard inner product and the Euclidean norm are denoted by $\langle \cdot,\cdot \rangle$ and $\|\cdot\|$, respectively.


\section{Vertex-to-vertex equipartitions of the plane into almost equilateral triangles\label{sec:triangles}}

The general idea of the construction is the following: Consider the
strip $S=\R \times [-1,1]$. Tile $S$ by (pairwise congruent) triangles of 
unit area with edge lengths $\sqrt{2},\sqrt{2}$ and $2$, see Figure~\ref{fig:streifen-ungestoert}.  
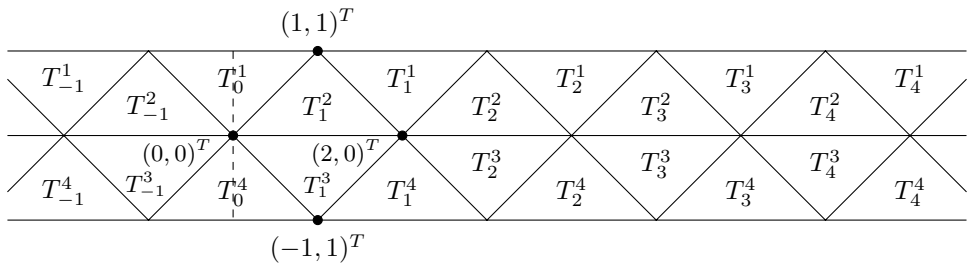
\begin{figure}[b]
\begin{center}
\begin{tikzpicture}[xscale=.375,yscale=.375]

\draw[dashed]
  (0,3)--(0,-3)
  ;

\filldraw
  (0,0) circle (1.6mm)
  (6,0) circle (1.6mm)
  (3,3) circle (1.6mm)
  (3,-3) circle (1.6mm)
  ;
 
\draw 
  (-8,3)--(26,3)
  (-8,0)--(26,0)
  (-8,-3)--(26,-3)
  (-8,2)--(-3,-3)--(3,3)--(9,-3)--(15,3)--(21,-3)--(26,2)
  (-8,-2)--(-3,3)--(3,-3)--(9,3)--(15,-3)--(21,3)--(26,-2)

  (-2,-.625) node {\small $(0,0)^T$}
  (4,-.625) node {\small $(2,0)^T$}
  (3,4) node {$(1,1)^T$}
  (3,-4) node {$(-1,1)^T$}
  
  (-6,2) node {$T_{-1}^1$}
  (-3,1) node {$T_{-1}^2$}
  (-3.1,-1.75) node {\small $T_{-1}^3$}
  (-6,-2) node {$T_{-1}^4$}
  (0,2) node {$T_0^1$}
  (0,-2) node {$T_0^4$}
  (6,2) node {$T_1^1$}
  (3,1) node {$T_1^2$}
  (3,-1.75) node {\small $T_1^3$}
  (6,-2) node {$T_1^4$}
  (12,2) node {$T_2^1$}
  (9,1) node {$T_2^2$}
  (9,-1) node {$T_2^3$}
  (12,-2) node {$T_2^4$}
  (18,2) node {$T_3^1$}
  (15,1) node {$T_3^2$}
  (15,-1) node {$T_3^3$}
  (18,-2) node {$T_3^4$}
  (24,2) node {$T_4^1$}
  (21,1) node {$T_4^2$}
  (21,-1) node {$T_4^3$}
  (24,-2) node {$T_4^4$}
  ;
\end{tikzpicture}
\end{center}

\caption{A tiling of the strip $S$ by pairwise congruent triangles.
\label{fig:streifen-ungestoert}}
\end{figure}
(At the beginning we work with isosceles right triangles in order to adopt some calculations from \cite{fr}.) Distort the tiling of $S$ by moving the vertex 
at $(0,0)^T$ to $(0,y_0)^T$ for $0<y_0<1$,
see Figure~\ref{fig:streifen-bez}.

Under the conditions that (i) the topology of the tiling is unchanged, (ii) the new tiling 
still is a tiling of $S$, (iii) the new tiling is mirror symmetric with respect to the vertical
axis $x=0$, and (iv) the tiles of the new tiling remain triangles of 
unit area, the value of $y_0$ determines all other vertices of the tiling.
See Figure \ref{fig:streifen-bez} for the situation where $y_0=\frac{1}{5}$.

In the sequel the strategy of the proof is as follows: first we obtain recursive
formulas for the coordinates of the triangles in the tiling of the strip when
$y_0$ varies, in order to control the amount of distortion of the triangles (Lemma
\ref{lem:auxiliary_estimates}). This ensures in particular that all deviations from the undistorted tilings can be kept arbitrarily small (Lemma \ref{lem:small_deviations}). 
Then we study the tiling for
$y_0 = \frac{1}{\sqrt{3}}$, having the particular property that it contains many pairwise
congruent triangles $T \cong T'$; and even stronger: it contains many triangles $T,T'$
such that $T$ or $-T$ is a translate of $T'$ (see Figure \ref{fig:1/sqrt(3)},
this property is denoted by $T \simeq T'$).
This is Lemma \ref{lem:1/sqrt(3)}. It is used in Lemmas \ref{lem:F(i,j,i',j')}
and \ref{lem:Ffinite} to show that there are only countably many $y_0$ such that
the corresponding tiling contains triangles $T,T'$ such that $T \simeq T'$. This in turn
enables us to pick a tiling $\overline{\T}^\varepsilon$ of the widened strip $\overline{S}=\mathbb{R} \times \left[-\sqrt{3},\sqrt{3}\right]$ that is $\varepsilon$-close to the tiling of $\overline{S}$ by equilateral triangles from Figure~\ref{fig:equilat_undistorted} such that for all $T,T'\in \overline{\T}^\varepsilon$ holds: $T \not\simeq T'$ (Corollary \ref{cor:T_eps}). Then again a countability argument allows us to find  
sheared copies $\big( \begin{smallmatrix} 1 & \mu_n\\ 0 & 1 \end{smallmatrix}
\big) \overline{\T}^\varepsilon$ of $\overline{\T}^\varepsilon$ such that no pair of congruent tiles occurs within them, nor in between them (Lemma \ref{lem:kong-endlich}, Corollary
\ref{cor:Tn_eps}). The tilings $\big( \begin{smallmatrix} 1 & \mu_n\\ 0 & 1 \end{smallmatrix}
\big) \overline{\T}^\varepsilon$ (of the strip $\overline{S}$) can be stacked in order to obtain
the desired vertex-to-vertex tiling of the plane that is $2\varepsilon$-close to the periodic vertex-to-vertex tiling by equilateral triangles of edge length $2$.
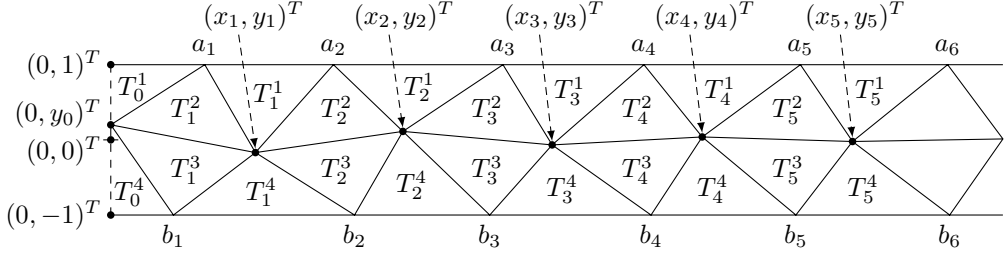
\begin{figure}
\begin{center}
\begin{tikzpicture}
\draw[dashed] 
  (0,1) -- (0,-1)
  ;
  
\fill
  (0,1) circle (.5mm)
  (0,.2) circle (.5mm)
  (0,0) circle (.5mm)
  (0,-1) circle (.5mm)
  ( 1.92307692308 , -0.169230769231 ) circle (.5mm)
  ( 3.88360118583 , 0.112523014511 ) circle (.5mm)
  ( 5.86782183927 , -0.0671455252767 ) circle (.5mm)
  ( 7.86284679278 , 0.0385390385326 ) circle (.5mm)
  ( 9.86102836451 , -0.021837269111 ) circle (.5mm)
  ;

\draw[densely dashed,-latex]
  (1.7,1.4)--( 1.92307692308 , -0.12 )
  ;
\draw[densely dashed,-latex]
  (3.7,1.4)--( 3.88360118583 , 0.16 )
  ;
\draw[densely dashed,-latex]
  (5.7,1.4)--( 5.86782183927 , -0.02 )
  ;
\draw[densely dashed,-latex]
  (7.7,1.4)--( 7.86284679278 , 0.09 )
  ;
\draw[densely dashed,-latex]
  (9.7,1.4)--( 9.86102836451 , 0.03 )
  ;

\draw
  (0,1) -- (11.86,1) 
  (0,-1) -- (11.86,-1)
  (-.1,0)--(.1,0)
  ( 0 , 0.2 ) -- ( 1.25 , 1 ) -- ( 1.92307692308 , -0.169230769231 ) -- ( 0.833333333333 , -1 ) -- ( 0 , 0.2 ) -- ( 1.92307692308 , -0.169230769231 )
  ( 1.92307692308 , -0.169230769231 ) -- ( 2.96052631579 , 1 ) -- ( 3.88360118583 , 0.112523014511 ) -- ( 3.24074074074 , -1 ) -- ( 1.92307692308 , -0.169230769231 ) -- ( 3.88360118583 , 0.112523014511 )
  ( 3.88360118583 , 0.112523014511 ) -- ( 5.21410588202 , 1 ) -- ( 5.86782183927 , -0.0671455252767 ) -- ( 5.03845636003 , -1 ) -- ( 3.88360118583 , 0.112523014511 ) -- ( 5.86782183927 , -0.0671455252767 )
  ( 5.86782183927 , -0.0671455252767 ) -- ( 7.08826451609 , 1 ) -- ( 7.86284679278 , 0.0385390385326 ) -- ( 7.18241348752 , -1 ) -- ( 5.86782183927 , -0.0671455252767 ) -- ( 7.86284679278 , 0.0385390385326 )
  ( 7.86284679278 , 0.0385390385326 ) -- ( 9.16843217775 , 1 ) -- ( 9.86102836451 , -0.021837269111 ) -- ( 9.1081956929 , -1 ) -- ( 7.86284679278 , 0.0385390385326 ) -- ( 9.86102836451 , -0.021837269111 )
  ( 9.86102836451 , -0.021837269111 ) -- ( 11.1256909903 , 1 ) -- ( 11.8605645724 , 0.0123225275777 ) -- ( 11.1528452556 , -1 ) -- ( 9.86102836451 , -0.021837269111 ) -- ( 11.8605645724 , 0.0123225275777 )
  
  (0,1) node[left] {$(0,1)^T$}
  (0,.35) node[left] {$(0,y_0)^T$}
  (0,-.15) node[left] {$(0,0)^T$}
  (0,-1) node[left] {$(0,-1)^T$}
  (.3,.7) node {$T_0^1$}
  (.25,-.7) node {$T_0^4$}
  (2.1,.6) node {$T_1^1$}
  (1,.4) node {$T_1^2$}
  (1,-.4) node {$T_1^3$}
  (2,-.7) node {$T_1^4$}
  (4.1,.7) node {$T_2^1$}
  (3,.4) node {$T_2^2$}
  (3,-.4) node {$T_2^3$}
  (4,-.6) node {$T_2^4$}
  (6.1,.65) node {$T_3^1$}
  (5,.4) node {$T_3^2$}
  (5,-.4) node {$T_3^3$}
  (6,-.65) node {$T_3^4$}
  (8.1,.65) node {$T_4^1$}
  (7,.4) node {$T_4^2$}
  (7,-.4) node {$T_4^3$}
  (8,-.65) node {$T_4^4$}
  (10.1,.65) node {$T_5^1$}
  (9,.4) node {$T_5^2$}
  (9,-.4) node {$T_5^3$}
  (10,-.65) node {$T_5^4$}
  ( 1.25 , 1 ) node[above] {$a_1$}
  ( 2.96052631579 , 1 ) node[above] {$a_2$}
  ( 5.21410588202 , 1 ) node[above] {$a_3$}
  ( 7.08826451609 , 1 ) node[above] {$a_4$}
  ( 9.16843217775 , 1 ) node[above] {$a_5$}
  ( 11.1256909903 , 1 ) node[above] {$a_6$}
  ( 0.833333333333 , -1 ) node[below] {$b_1$}
  ( 3.24074074074 , -1 ) node[below] {$b_2$}
  ( 5.03845636003 , -1 ) node[below] {$b_3$}
  ( 7.18241348752 , -1 ) node[below] {$b_4$}
  ( 9.1081956929 , -1 ) node[below] {$b_5$}
  ( 11.1528452556 , -1 ) node[below] {$b_6$}
  (1.9,1.65) node {$(x_1,y_1)^T$}
  (3.9,1.65) node {$(x_2,y_2)^T$}
  (5.9,1.65) node {$(x_3,y_3)^T$}
  (7.9,1.65) node {$(x_4,y_4)^T$}
  (9.9,1.65) node {$(x_5,y_5)^T$}
  ;

\end{tikzpicture}
\end{center}
\caption{
The distorted tiling of the half-strip $S^+$. The actual parameter for this
one is $y_0=\frac{1}{5}$. 
\label{fig:streifen-bez}}
\end{figure}

Let us start by considering the tilings of the strip $S$. 
Since we have mirror symmetry with respect to the vertical axis,
we first study the situation within the right half $S^+=[0,\infty) \times [-1,1]$ of $S$.
We need some notation, see Figure \ref{fig:streifen-bez}: 
Let $(x_i, y_i)^T$ denote the coordinates of 
the vertices along the central (distorted) line, separating the upper 
layer of triangles from the lower layer of triangles. Let $a_i$ denote
the $x$-coordinate of the vertices along the upper boundary of
the strip $S$ (the $y$-coordinate is always 1), and let $b_i$ denote
the $x$-coordinate of the vertices along the lower boundary of
the strip $S$ (the $y$-coordinate is always $-1$). Based on the parameter $y_0$, let
\begin{equation}\label{eq:start}
x_0=0, \quad y_0=y_0, \quad a_1 = \frac{1}{1-y_0}, \quad b_1=\frac{1}{1+y_0}
\end{equation}
and, for $i=1,2,\ldots$,
\begin{align}
x_{i} &= x_{i-1}+2- \frac{2(a_{i}-b_{i})y_{i-1}}{(a_{i}-x_{i-1})(1+y_{i-1})+(b_{i}-x_{i-1})(1-y_{i-1})},\label{eq:xi}\\
y_{i} &= y_{i-1}-\frac{4y_{i-1}}{(a_{i}-x_{i-1})(1+y_{i-1})+(b_{i}-x_{i-1})(1-y_{i-1})},\label{eq:yi}\\
a_{i+1} &= a_i + \frac{2}{1-y_i},\label{eq:ai}\\
b_{i+1} &= b_i + \frac{2}{1+y_i}.\label{eq:bi}
\end{align}
The choice of $a_1$ and $b_1$ ensures that the triangles $T^1_0$ and $T^4_0$ have area $1$. Formulas \eqref{eq:xi}, \eqref{eq:yi}, \eqref{eq:ai} and \eqref{eq:bi} show that $T^1_i$, $T^2_i$, $T^3_i$ and $T^4_i$ are of unit area: a simple computation yields that they imply
\begin{align}
1 &=\frac{1}{2}\det\big((x_{i},y_{i})^T-(x_{i-1},y_{i-1})^T,(a_{i},1)^T-(x_{i-1},y_{i-1})^T\big),\label{eq:T_i'}\\
1 &=\frac{1}{2}\det\big((b_{i},-1)^T-(x_{i-1},y_{i-1})^T,(x_{i},y_{i})^T-(x_{i-1},y_{i-1})^T\big),\label{eq:D_i'}\\
1 &=\frac{1}{2}(a_{i+1}-a_i)(1-y_i),\label{eq:T_i}\\
1 &=\frac{1}{2}(b_{i+1}-b_i)(1+y_i)\label{eq:D_i}
\end{align}
for $i=1,2,\ldots$ Induction shows that 
\begin{equation}\label{eq:denominator}
4i-3=\frac{1}{2}\big((x_{i-1}+a_i)(1-y_{i-1})+(x_{i-1}+b_i)(1+y_{i-1})\big)
\end{equation}
for $i=1,2,\ldots$: indeed, \eqref{eq:start} gives \eqref{eq:denominator} for $i=1$, and adding \eqref{eq:T_i'}, \eqref{eq:D_i'}, \eqref{eq:T_i} and \eqref{eq:D_i} to \eqref{eq:denominator} yields \eqref{eq:denominator} with $i$ replaced by $i+1$. By \eqref{eq:denominator}, the denominator in \eqref{eq:xi} and \eqref{eq:yi} coincides with $2(-4i+3+a_i+b_i)$. Thus
\begin{equation} \label{eq:rek-xi-yi}
x_{i} = x_{i-1}+2- \frac{(a_{i}-b_{i})y_{i-1}}{-4i+3+a_i+b_i},\quad
y_{i} = y_{i-1}-\frac{2y_{i-1}}{-4i+3+a_i+b_i}.
\end{equation}
For the sake of simplicity let $\alpha_i=a_i-(2i-1)$, $\beta_i=b_i-(2i-1)$, $\xi_i=x_i-2i$
denote the deviations of $a_i, b_i, x_i$ in the distorted tiling from 
the corresponding values in the undistorted situation. Then 
\begin{equation}\label{eq:dstart}
\xi_0=0, \quad y_0=y_0, \quad \alpha_1=\frac{y_0}{1-y_0}, \quad \beta_1=\frac{-y_0}{1+y_0}
\end{equation}
and, for $i=1,2,\ldots$, 
\begin{align}
\xi_i &= \xi_{i-1} - \frac{(\alpha_i-\beta_i) y_{i-1}}{1+\alpha_i+ \beta_i},\label{eq:dxi}\\
y_i &= y_{i-1} - \frac{2y_{i-1}}{1+\alpha_i+ \beta_i}\label{eq:dyi},\\
\alpha_{i+1} &= \alpha_i + \frac{2 y_i}{1-y_i},\label{eq:dai}\\
\beta_{i+1} &= \beta_i - \frac{2 y_i}{1+y_i}.\label{eq:dbi}
\end{align}
Formulas \eqref{eq:dstart}, \eqref{eq:dai} and \eqref{eq:dbi} show that
\[
\alpha_i+\beta_i= 2 \frac{y_0^2}{1-y_0^2} + 4 \left( \frac{y_1^2}{1-y_1^2} + 
\cdots + \frac{y_{i-1}^2}{1-y_{i-1}^2} \right).
\] 
For the sake of briefness let 
\begin{equation}\label{eq:hi}
h_i=1+\alpha_{i+1}+\beta_{i+1}=1+2 \frac{y_0^2}{1-y_0^2} + 4 \left( \frac{y_1^2}{1-y_1^2} + 
\cdots + \frac{y_i^2}{1-y_i^2} \right)  
\end{equation}
for $i=0,1,\ldots$ Then \eqref{eq:dyi} becomes
\begin{equation}\label{eq:yi+1}
y_{i+1} = \left( 1- \frac{2}{h_i} \right) y_i \quad\text{ with }\quad h_{i+1}=h_i + 4 
\frac{y_{i+1}^2}{1-y_{i+1}^2}, \quad h_0 = 1+2 \frac{y_0^2}{1-y_0^2}. 
\end{equation}

\begin{lem}\label{lem:auxiliary_estimates}
There exists $\delta > 0$ such that, for every $0 < y_0 < \delta$ and every $i \ge 0$,
\begin{eqnarray}
& h_i < 1+5\left(y_0^2+\ldots+y_i^2\right) < 2, \label{eq:hi_estimate}\\
& 1 < h_0 < h_1 < h_2 < \ldots < 2, \label{eq:hi_monotone}\\
&|y_0| > |y_1| > |y_2| > \ldots > 0 \quad\text{ and }\quad \operatorname{sign}(y_i)=(-1)^i, \label{eq:yi_monotone}\\
& \sum_{j=0}^\infty |y_j| < 4.\label{eq:yi_abs_sum}
\end{eqnarray}
\end{lem}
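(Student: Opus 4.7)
The plan is to prove all four estimates simultaneously by induction on $i$. The base case $i=0$ is immediate from \eqref{eq:hi}: once $\delta^2<3/5$, we have $h_0=1+2y_0^2/(1-y_0^2)<1+5y_0^2<2$, and the remaining properties are trivial. For the inductive step, the decisive observation in \eqref{eq:yi+1} is that the factor $(h_i-2)/h_i$ lies in $(-1,0)$ as soon as $h_i\in(1,2)$ holds by induction; this simultaneously yields $|y_{i+1}|<|y_i|$ strictly and $\operatorname{sign}(y_{i+1})=-\operatorname{sign}(y_i)$, proving \eqref{eq:yi_monotone} for $i+1$. The monotonicity $h_{i+1}>h_i$ in \eqref{eq:hi_monotone} is automatic from $y_{i+1}\neq 0$, and the bound \eqref{eq:hi_estimate} propagates by adding the single new term $h_{i+1}-h_i=4y_{i+1}^2/(1-y_{i+1}^2)\le 5y_{i+1}^2$ (valid since $y_{i+1}^2<\delta^2<1/5$) to the induction hypothesis.

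The heart of the argument is closing the bootstrap by proving, uniformly in $i$, the sharp bounds $h_i<2$ (equivalently $\sum y_j^2<1/5$) and $\sum|y_j|<4$. I would introduce the Lyapunov quantity $L_i:=(h_i-1)^2+2y_i^2$ and verify by direct algebra, using $y_{i+1}^2-y_i^2=-4y_i^2(h_i-1)/h_i^2$ together with $h_{i+1}-h_i=4y_{i+1}^2/(1-y_{i+1}^2)$, that
\[
L_{i+1}-L_i=\frac{8y_i^2(h_i-1)^2(h_i-3)}{h_i^2(1-y_{i+1}^2)}+\frac{8y_i^2(h_i-1)y_{i+1}^2}{h_i^2(1-y_{i+1}^2)}+\frac{16y_{i+1}^4}{(1-y_{i+1}^2)^2}.
\]
The dominant first term is negative since $h_i-3<0$, while the other two are bounded by a universal constant times $y_i^4$. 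Telescoping yields $L_i\le L_0+C\sum_{j<i}y_j^4\le L_0+Cy_0^2\sum y_j^2$. Combining with the weak preliminary bound $\sum y_j^2\le 1/4$ (which follows from the induction hypothesis $h_i<2$ and the telescoping $h_{j+1}-h_j\ge 4y_{j+1}^2$) gives $L_i=O(y_0^2)$ uniformly, hence $h_i-1=O(y_0)$. Re-telescoping then sharpens the $L^2$ estimate to $\sum y_j^2\le y_0^2+(h_\infty-h_0)/4=O(y_0)$, strictly less than $1/5$ for $\delta$ small, closing the induction for \eqref{eq:hi_estimate} and \eqref{eq:hi_monotone}.

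For \eqref{eq:yi_abs_sum} I would split the trajectory at the first index $N$ with $h_N\ge 1+y_0$. Before $N$, the inequality $|y_{j+1}|/|y_j|=(2-h_j)/h_j\ge(1-y_0)/(1+y_0)\ge 1-2y_0$ gives $|y_j|\ge y_0(1-2y_0)^j$, so the resulting lower bound on $\sum_{j<N}y_j^2$ combined with the $L^2$ estimate $\sum y_j^2=O(y_0)$ forces $N=O(1/y_0)$, contributing $Ny_0=O(1)$ to $\sum|y_j|$. After $N$, monotonicity of $h_j$ yields $|y_{j+1}|/|y_j|\le(1-y_0)/(1+y_0)\le 1-y_0$, producing geometric decay with tail sum at most $|y_N|/y_0\le 1$. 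For $\delta$ sufficiently small the total is well below $4$. The principal obstacle is running the Lyapunov bootstrap cleanly, because in the initial phase one has $|y_i|\gg h_i-1$ and $L_i$ genuinely increases before stabilizing; the fix is to absorb this transient growth into the convergent tail $C\sum y_j^4\le Cy_0^2\sum y_j^2$ supplied by the weak $L^2$ bound, and then iterate.
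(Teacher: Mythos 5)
Your handling of \eqref{eq:hi_estimate}--\eqref{eq:yi_monotone} is correct and takes a genuinely different route from the paper. I verified your identity for $L_{i+1}-L_i$ with $L_i=(h_i-1)^2+2y_i^2$; it is exact, the first term is indeed negative for $1<h_i<2$, and dropping it while telescoping the two remaining terms (each at most a constant times $y_i^4$) against the weak bound $\sum_j y_j^2\le\frac14+y_0^2$ closes the bootstrap, since $(h_{i+1}-1)^2\le L_{i+1}=O(y_0^2)$ already forces $h_{i+1}<2$ for small $y_0$. The paper instead bounds $\sum_{j\le i+1}y_j^2$ by a geometric series with ratio $\left(\frac{2}{h_0}-1\right)^2$, obtaining $\sum_j y_j^2<\frac18h_0^2(1-y_0^2)<\frac15$ and hence $h_{i+1}<1+5\sum_j y_j^2<2$; that is shorter, but your conserved quantity buys the sharper conclusions $h_\infty-1=O(y_0)$ and $\sum_j y_j^2=O(y_0)$.

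The proof of \eqref{eq:yi_abs_sum}, however, has a genuine gap at the step ``forces $N=O(1/y_0)$''. Writing $r=\frac{1-y_0}{1+y_0}$, the bound $|y_j|\ge y_0r^j$ for $j<N$ gives at best
$\sum_{j<N}y_j^2\ge y_0^2\,\frac{1-r^{2N}}{1-r^2}=\frac{(1+y_0)^2}{4}\,y_0\left(1-r^{2N}\right)$,
which is smaller than $\frac{y_0(1+y_0)^2}{4}$ \emph{for every} $N$, including $N=\infty$. To contradict this with an upper bound $\sum_j y_j^2\le Ky_0$ you would need $K<\frac14$; but one can check (e.g.\ from the near-conservation of your own $L_i$, which gives $h_\infty-1=\sqrt2\,y_0(1+O(y_0))$ and hence $\sum_j y_j^2=\frac{y_0}{2\sqrt2}(1+O(y_0))$) that the true constant exceeds $\frac14$. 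So the two estimates you propose to combine are compatible with every value of $N$, and no bound on $N$ follows. The fact $N=O(1/y_0)$ is true, but it requires a quantitative \emph{lower} bound on the growth of $h_j$, hence a lower bound on $|y_j|$ that survives into the regime $j\sim 1/y_0$ where the crude product estimate degenerates.

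The clean repair is the one the paper uses: split $\sum_j|y_j|$ at the \emph{fixed} index $\lfloor\frac{1}{4y_0}\rfloor$ rather than at a threshold on $h$. The head is then trivially at most $\lfloor\frac{1}{4y_0}\rfloor y_0\le\frac14$, and all the work goes into the lower bound $\big|y_{\lfloor 1/(4y_0)\rfloor}\big|>\frac{8}{15}y_0$ (the paper's \eqref{eq:y[]}), which combined with $y_0^2+\ldots+y^2_{\lfloor 1/(4y_0)\rfloor}\ge\left(\lfloor\frac{1}{4y_0}\rfloor+1\right)y^2_{\lfloor 1/(4y_0)\rfloor}$ yields a per-step contraction of order $y_0$ from that index on and a geometric tail of sum less than $\frac{15}{4}$. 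With that substitution your argument goes through; note that the Lyapunov machinery is then not needed for \eqref{eq:yi_abs_sum} at all.
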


\begin{proof}
\emph{Step 1. We verify \eqref{eq:hi_estimate}, \eqref{eq:hi_monotone} and \eqref{eq:yi_monotone} by showing the following inductively: if $y_0>0$ satisfies 
\begin{equation}
\frac{1}{8}\left(1+2\frac{y_0^2}{1-y_0^2}\right)^2\left(1-y_0^2\right) < \frac{1}{5}
\quad\text{ and }\quad y_0 < \frac{1}{\sqrt{5}},
\label{eq:y0_assumption}
\end{equation}
then, for every $i \ge 0$,
\begin{enumerate}
\item[(a$_i$)] $h_i < 1+5\left(y_0^2+\ldots+y_i^2\right) < 2$,
\item[(b$_i$)] $1 < h_0 < h_1 < \ldots < h_i < 2$,
\item[(c$_i$)] $|y_0| > |y_1| > \ldots > |y_{i+1}| > 0$ and $\operatorname{sign}(y_{i+1})=(-1)^{i+1}$.
\end{enumerate}} 

Base case ($i=0$): By \eqref{eq:yi+1},
\[
h_0=1+2\frac{y_0^2}{1-y_0^2} \stackrel{\eqref{eq:y0_assumption}}{<} 1+2\frac{y_0^2}{1-\frac{1}{5}} < 1+5y_0^2 \stackrel{\eqref{eq:y0_assumption}}{<} 2. 
\]
This yields (a$_0$) and (b$_0$). Moreover,
\[
y_1\stackrel{\eqref{eq:yi+1}}{=} \left(1-\frac{2}{h_0}\right)y_0
\stackrel{\eqref{eq:yi+1}}{=}-\frac{1-2\frac{y_0^2}{1-y_0^2}}{1+2\frac{y_0^2}{1-y_0^2}}\,y_0=
-\frac{1-3y_0^2}{1+y_0^2}\,y_0.
\]
The second part of \eqref{eq:y0_assumption} shows that $\frac{1-3y_0^2}{1+y_0^2}\in \left(\frac{1}{3},1\right)$. This implies (c$_0$).

Step of induction: First note that
\begin{eqnarray}
y_0^2+\ldots+y_{i+1}^2 
&\stackrel{\eqref{eq:yi+1}}{=}&
\sum_{j=0}^{i+1} \left(\frac{2}{h_{j-1}}-1\right)^2\left(\frac{2}{h_{j-2}}-1\right)^2\ldots\left(\frac{2}{h_0}-1\right)^2y_0^2\nonumber\\
&\stackrel{\text{(b$_i$)}}{\le}&
\sum_{j=0}^{i+1} \left(\left(\frac{2}{h_0}-1\right)^2\right)^j y_0^2\nonumber\\
&\le&
y_0^2\,\sum_{j=0}^{\infty} \left(\left(\frac{2}{h_0}-1\right)^2\right)^j\nonumber\\
&\stackrel{\text{(b$_i$)}}{=}&
y_0^2\,\frac{1}{1-\left(\frac{2}{h_0}-1\right)^2}\nonumber\\
&\stackrel{\eqref{eq:yi+1}}{=}&
\frac{1}{8}\left(1+2\frac{y_0^2}{1-y_0^2}\right)^2\left(1-y_0^2\right)\nonumber\\
&\stackrel{\eqref{eq:y0_assumption}}{<}&
\frac{1}{5}.\label{eq:sum_yi^2}
\end{eqnarray}
By (c$_i$) and \eqref{eq:y0_assumption}, $y_0^2,\ldots,y_{i+1}^2 \le y_0^2<\frac{1}{5}$ and in turn $\frac{1}{1-y_0^2},\ldots,\frac{1}{1-y_{i+1}^2} < \frac{5}{4}$. Hence
\begin{eqnarray*}
h_{i+1}
&\stackrel{\eqref{eq:hi}}{=}&
1+2\frac{y_0^2}{1-y_0^2}+4\left(\frac{y_1^2}{1-y_1^2}+\ldots+\frac{y_{i+1}^2}{1-y_{i+1}^2}\right)\\
&<&
1+2\left(\frac{5}{4}y_0^2\right)+4\left(\frac{5}{4}y_1^2+\ldots+\frac{5}{4}y_{i+1}^2\right)\\
&<& 1+5\left(y_0^2+\ldots+y_{i+1}^2\right).
\end{eqnarray*}
This together with \eqref{eq:sum_yi^2} shows (a$_{i+1}$).

Claim (b$_{i+1}$) follows from (b$_i$) by \eqref{eq:yi+1} and (a$_{i+1}$).

Finally, note that (b$_{i+1}$) implies $-1 < 1-\frac{2}{h_{i+1}} < 0$. Therefore equation $y_{i+2}\stackrel{\eqref{eq:yi+1}}{=}\left(1-\frac{2}{h_{i+1}}\right)y_{i+1}$ together with (c$_i$) shows (c$_{i+1}$).

\emph{Step 2. Proof of \eqref{eq:yi_abs_sum}. } 
First we establish two auxiliary estimates. For every $j \ge 0$,
\begin{eqnarray}
\frac{2}{h_{j}}-1
&=&
\frac{2-h_j}{h_j}\nonumber\\
&\stackrel{\eqref{eq:yi_monotone}}{<}& 2-h_j\nonumber\\ 
&\stackrel{\eqref{eq:hi}}{=}&
1-2\frac{y_0^2}{1-y_0^2}-4\left(\frac{y_1^2}{1-y_1^2}+\ldots+\frac{y_{j}^2}{1-y_j^2}\right)\nonumber\\
&\stackrel{\eqref{eq:yi_monotone}}{<}&
1-2y_0^2-4\left(y_1^2+\ldots+y_j^2\right)\nonumber\\
&<&
1-2\left(y_0^2+\ldots+y_j^2\right).\label{eq:2/h-1}
\end{eqnarray}
Furthermore,
\begin{eqnarray*}
\left|y_{\left\lfloor\frac{1}{4y_0}\right\rfloor}\right| 
&\stackrel{\eqref{eq:yi+1},\eqref{eq:hi_monotone}}{=}& \left(\frac{2}{h_{\left\lfloor\frac{1}{4y_0}\right\rfloor-1}}-1\right)\left(\frac{2}{h_{\left\lfloor\frac{1}{4y_0}\right\rfloor-2}}-1\right)\ldots\left(\frac{2}{h_0}-1\right)y_0\\
&\stackrel{\eqref{eq:hi_monotone}}{\ge}& 
\left(\frac{2}{h_{\left\lfloor\frac{1}{4y_0}\right\rfloor-1}}-1\right)^{\left\lfloor\frac{1}{4y_0}\right\rfloor} y_0\\
&\stackrel{\eqref{eq:hi_estimate}}{>}& 
\left(\frac{2}{1+5\left(y_0^2+\ldots+y_{\left\lfloor\frac{1}{4y_0}\right\rfloor-1}^2\right)}-1\right)^{\left\lfloor\frac{1}{4y_0}\right\rfloor} y_0\\
&\stackrel{\eqref{eq:yi_monotone}}{\ge}& 
\left(\frac{2}{1+5\left\lfloor\frac{1}{4y_0}\right\rfloor 
y_0^2}-1\right)^{\left\lfloor\frac{1}{4y_0}\right\rfloor} y_0.
\end{eqnarray*}
We have $5\left\lfloor\frac{1}{4y_0}\right\rfloor y_0^2\le 5\frac{1}{4y_0}y_0^2=\frac{5}{4}y_0 \le 1$, since $0<y_0<\frac{1}{\sqrt{5}}$. Therefore,
\begin{eqnarray}
\left|y_{\left\lfloor\frac{1}{4y_0}\right\rfloor}\right| 
&\ge&
\left(\frac{2}{1+\frac{5}{4}y_0}-1\right)^{\left\lfloor\frac{1}{4y_0}\right\rfloor} y_0\nonumber\\
&=&\left(1+\frac{-\frac{5}{8}}{\frac{1}{4y_0}+\frac{5}{16}}\right)^{\left\lfloor\frac{1}{4y_0}\right\rfloor} y_0 \nonumber\\
&>& \frac{8}{15}y_0 \label{eq:y[]}
\end{eqnarray}
if $y_0 >0$ is sufficiently small, because $\lim_{y_0 \downarrow 0} \left(1+\frac{-\frac{5}{8}}{\frac{1}{4y_0}+\frac{5}{16}}\right)^{\left\lfloor\frac{1}{4y_0}\right\rfloor}=e^{-\frac{5}{8}}>\frac{8}{15}$. Now we estimate
\begin{eqnarray*}
\sum_{j=0}^\infty |y_j| 
&=& \sum_{j=0}^{\left\lfloor\frac{1}{4y_0}\right\rfloor-1} |y_j|+ \sum_{j=\left\lfloor\frac{1}{4y_0}\right\rfloor}^\infty |y_j|\\
&\stackrel{\eqref{eq:yi_monotone},\eqref{eq:yi+1},\eqref{eq:hi_monotone}}{\le}&
\left\lfloor\frac{1}{4y_0}\right\rfloor y_0+\sum_{j=\left\lfloor\frac{1}{4y_0}\right\rfloor}^\infty \left(\frac{2}{h_{j-1}}-1\right)\left(\frac{2}{h_{j-2}}-1\right)\ldots\left(\frac{2}{h_{\left\lfloor\frac{1}{4y_0}\right\rfloor}}-1\right)\left|y_{\left\lfloor\frac{1}{4y_0}\right\rfloor}\right|\\
&\stackrel{\eqref{eq:hi_monotone}}{\le}&
\frac{1}{4}+\sum_{j=\left\lfloor\frac{1}{4y_0}\right\rfloor}^\infty \left(\frac{2}{h_{\left\lfloor\frac{1}{4y_0}\right\rfloor}}-1\right)^{j-\left\lfloor\frac{1}{4y_0}\right\rfloor}\left|y_{\left\lfloor\frac{1}{4y_0}\right\rfloor}\right|\\
&=&
\frac{1}{4}+\sum_{j=0}^\infty \left(\frac{2}{h_{\left\lfloor\frac{1}{4y_0}\right\rfloor}}-1\right)^{j}\left|y_{\left\lfloor\frac{1}{4y_0}\right\rfloor}\right|\\
&\stackrel{\eqref{eq:2/h-1}}{<}&
\frac{1}{4}+\sum_{j=0}^\infty \left(1-2\left(y_0^2+\ldots+y_{\left\lfloor\frac{1}{4y_0}\right\rfloor}^2\right)\right)^{j}\left|y_{\left\lfloor\frac{1}{4y_0}\right\rfloor}\right|\\
&\stackrel{\eqref{eq:hi_estimate}}{=}&
\frac{1}{4}+\frac{1}{1-\left(1-2\left(y_0^2+\ldots+y_{\left\lfloor\frac{1}{4y_0}\right\rfloor}^2\right)\right)}\left|y_{\left\lfloor\frac{1}{4y_0}\right\rfloor}\right|\\
&=&
\frac{1}{4}+\frac{1}{2\left(y_0^2+\ldots+y_{\left\lfloor\frac{1}{4y_0}\right\rfloor}^2\right)}\left|y_{\left\lfloor\frac{1}{4y_0}\right\rfloor}\right|\\
&\stackrel{\eqref{eq:yi_monotone}}{\le}&
\frac{1}{4}+\frac{1}{2\left(\left\lfloor\frac{1}{4y_0}\right\rfloor+1\right) y_{\left\lfloor\frac{1}{4y_0}\right\rfloor}^2}\left|y_{\left\lfloor\frac{1}{4y_0}\right\rfloor}\right|\\
&=&
\frac{1}{4}+\frac{1}{2\left(\left\lfloor\frac{1}{4y_0}\right\rfloor+1\right) \left|y_{\left\lfloor\frac{1}{4y_0}\right\rfloor}\right|}\\
&\stackrel{\eqref{eq:y[]}}{<}&
\frac{1}{4}+\frac{1}{2\frac{1}{4y_0} \frac{8}{15}y_0}\\
&=&4
\end{eqnarray*}
if $y_0>0$ is sufficiently small. This is claim \eqref{eq:yi_abs_sum}.
\end{proof}

\begin{lem}\label{lem:small_deviations}
For every $\varepsilon > 0$, there exists $\delta > 0$ such that, for every $0 < y_0 < \delta$, 
all the absolute deviations $|\alpha_i|$, $|\beta_i|$, $|y_i|$ and $|\xi_i|$ of the distorted tiling $\mathcal{T}=\mathcal{T}(y_0)$ of $S$ from the undistorted tiling are uniformly bounded by $\varepsilon$.
\end{lem}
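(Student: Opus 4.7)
The plan is to derive all four deviation bounds as $O(y_0)$ from Lemma~\ref{lem:auxiliary_estimates} and then choose $\delta$ small enough that the implied constants times $\delta$ fall below $\varepsilon$. Two of the bounds follow easily. By \eqref{eq:yi_monotone}, $|y_i|\le y_0<\delta$. The identity $\alpha_i+\beta_i=h_{i-1}-1$ combined with \eqref{eq:hi_estimate}, the trivial inequality $y_j^2\le y_0|y_j|$, and \eqref{eq:yi_abs_sum} yields
\[
|\alpha_i+\beta_i|<5\sum_{j=0}^{i-1}y_j^2\le 5y_0\sum_{j=0}^{\infty}|y_j|<20\,y_0.
\]

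The heart of the proof is the individual bound for $|\alpha_i|$ and $|\beta_i|$. Telescoping \eqref{eq:dai} and \eqref{eq:dbi} gives
\[
\alpha_i=\alpha_1+\sum_{j=1}^{i-1}\frac{2y_j}{1-y_j},\qquad \beta_i=\beta_1-\sum_{j=1}^{i-1}\frac{2y_j}{1+y_j}.
\]
A termwise estimate is insufficient here, because Lemma~\ref{lem:auxiliary_estimates} only guarantees $\sum|y_j|<4$, not smallness as $y_0\downarrow 0$. The essential observation is that $(y_j)$ alternates in sign with $(|y_j|)$ strictly decreasing, by \eqref{eq:yi_monotone}. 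Splitting
\[
\frac{2y_j}{1\mp y_j}=2y_j\pm\frac{2y_j^2}{1\mp y_j},
\]
the Leibniz bound for alternating series controls the linear part by $\bigl|\sum_{j=1}^{i-1}2y_j\bigr|\le 2|y_1|\le 2y_0$, while the quadratic remainder is bounded by $\sum_{j\ge 1}\frac{2y_j^2}{1\mp y_j}\le 4\sum_{j\ge 1}y_j^2\le 4y_0\sum_{j\ge 1}|y_j|<16\,y_0$, provided $\delta<\tfrac12$ so that $|1\mp y_j|\ge\tfrac12$. Combined with $|\alpha_1|,|\beta_1|<2y_0$, this yields $|\alpha_i|,|\beta_i|\le Cy_0$ for a universal constant~$C$.

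For $|\xi_i|$, telescoping \eqref{eq:dxi} and invoking $1+\alpha_j+\beta_j=h_{j-1}>1$ from \eqref{eq:hi_monotone} produces
\[
|\xi_i|=\left|\sum_{j=1}^{i}\frac{(\alpha_j-\beta_j)\,y_{j-1}}{h_{j-1}}\right|\le 2Cy_0\sum_{j=0}^{\infty}|y_j|<8Cy_0
\]
by the previous paragraph and \eqref{eq:yi_abs_sum}. Choosing $\delta$ so that $\max(1,20,C,8C)\,\delta<\varepsilon$ makes all four deviations smaller than $\varepsilon$ simultaneously. The main obstacle is precisely the Leibniz argument for $\alpha_i$ and $\beta_i$: without exploiting the alternation of $(y_j)$, the individual deviations could only be bounded by absolute constants rather than driven to zero with $y_0$.
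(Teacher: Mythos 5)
Your proof is correct, and for the two hard bounds it takes a genuinely different route from the paper. For $|\alpha_i|$ and $|\beta_i|$ the paper also telescopes \eqref{eq:dai}--\eqref{eq:dbi}, but it then groups the sum into consecutive pairs $\frac{y_{2l-1}}{1\mp y_{2l-1}}+\frac{y_{2l}}{1\mp y_{2l}}$, substitutes the recursion $y_{2l}=\bigl(1-\frac{2}{h_{2l-1}}\bigr)y_{2l-1}$ from \eqref{eq:yi+1}, and shows each pair is of size $O\bigl(|y_{2l-1}|(h_{2l-1}-1)+y_{2l-1}^2\bigr)$, which is summed via a separately established auxiliary estimate to get the explicit bound $2118\,y_0$. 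Your decomposition $\frac{2y_j}{1\mp y_j}=2y_j\pm\frac{2y_j^2}{1\mp y_j}$ captures the same cancellation phenomenon --- the pairing and the Leibniz bound are two faces of the alternation guaranteed by \eqref{eq:yi_monotone} --- but it is cleaner: it avoids re-invoking the recursion, needs only \eqref{eq:yi_monotone} and \eqref{eq:yi_abs_sum}, and yields the sharper constant $20$. You are also right to flag that a termwise estimate would fail here; that is exactly the difficulty the paper's pairing is designed to overcome. For $|\xi_i|$ the approaches diverge more visibly: the paper derives the closed identity $\xi_i=\frac12\bigl((\alpha_i-\beta_i)y_i-(\alpha_i+\beta_i)\bigr)$ by computing the area of the pentagon with vertices $(0,\pm 1)^T$, $(a_i,1)^T$, $(x_i,y_i)^T$, $(b_i,-1)^T$ as a sum of two trapezoids, which makes the bound immediate from those on $\alpha_i,\beta_i,y_i$; you instead telescope \eqref{eq:dxi} directly and use $1+\alpha_j+\beta_j=h_{j-1}>1$ together with $\sum_j|y_j|<4$. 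Both are valid; the paper's identity sidesteps the summability of $|y_j|$ for this step, while your version stays entirely within the recursive framework. The only hygiene points worth making explicit are that the identity $\alpha_i+\beta_i=h_{i-1}-1$ comes from \eqref{eq:hi} and that your $\delta$ must be taken at most the $\delta$ of Lemma~\ref{lem:auxiliary_estimates} and below $\tfrac12$; with those in place the argument is complete.
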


\begin{proof}
We assume $y_0$ to be sufficiently small such that claims \eqref{eq:hi_estimate}, \eqref{eq:hi_monotone}, \eqref{eq:yi_monotone} and \eqref{eq:yi_abs_sum} from Lemma~\ref{lem:auxiliary_estimates} are satisfied. We estimate
\begin{eqnarray}
\sum_{k=1}^\infty \left(|y_{2k-1}|\left|\frac{h_{2k-1}-1}{h_{2k-1}}\right|+y_{2k-1}^2\left|\frac{2}{h_{2k-1}}-1\right|\right)\hspace{-40ex}\nonumber\\
&\stackrel{\eqref{eq:hi_monotone}}{\le}&
\sum_{k=1}^\infty \left(|y_{2k-1}|(h_{2k-1}-1)+y_{2k-1}^2\right)\nonumber\\
&\stackrel{\eqref{eq:hi}}{=}&
\sum_{k=1}^\infty \left(|y_{2k-1}|\left(2\frac{y_0^2}{1-y_0^2}+4\left(\frac{y_1^2}{1-y_1^2}+\ldots+\frac{y_{2k-1}^2}{1-y_{2k-1}^2}\right)\right)+y_{2k-1}^2\right)\nonumber\\
&\le&
\sum_{k=1}^\infty \left(|y_{2k-1}|\left(4y_0^2+8\left(y_1^2+\ldots+y_{2k-1}^2\right)\right)+y_{2k-1}^2\right)\qquad (\text{since } |y_i| \stackrel{\eqref{eq:yi_monotone}}{\le} |y_0| \le \frac{1}{\sqrt{2}})\nonumber\\
&\le&
\sum_{k=1}^\infty \left(8|y_{2k-1}|\left(\sum_{j=0}^\infty y_j^2\right)+y_{2k-1}^2\right)\nonumber\\
&\le&
\left(\sum_{j=0}^\infty y_j^2\right)\left(1+8\sum_{k=1}^\infty |y_{2k-1}|\right)\nonumber\\
&\stackrel{\eqref{eq:yi_monotone}}{\le}&
y_0\left(\sum_{j=0}^\infty |y_j|\right)\left(1+8\sum_{k=0}^\infty |y_k|\right)\nonumber\\
&\stackrel{\eqref{eq:yi_abs_sum}}{<}&
132y_0.\label{eq:auxiliary_estimate_sum}
\end{eqnarray}
Now we obtain
\begin{eqnarray}
|\alpha_i| 
&\stackrel{\eqref{eq:dstart},\eqref{eq:dai}}{=}& 
\left|\frac{y_0}{1-y_0}+\sum_{j=1}^{i-1} \frac{2y_j}{1-y_j}\right|\nonumber\\
&\le& 
\left|\frac{y_0}{1-y_0}\right|+2\left|\frac{y_{i-i}}{1-y_{i-1}}\right|+2 \sum_{l=1}^\infty \left|\frac{y_{2l-1}}{1-y_{2l-1}}+\frac{y_{2l}}{1-y_{2l}}\right|\nonumber\\
&=& 
\left|\frac{y_0}{1-y_0}\right|+2\left|\frac{y_{i-i}}{1-y_{i-1}}\right|+2 \sum_{l=1}^\infty \left|\frac{y_{2l-1}+y_{2l}-2y_{2l-1}y_{2l}}{(1-y_{2l-1})(1-y_{2l})}\right|\nonumber\\
&\le& 
6y_0+8\sum_{l=1}^\infty \left|y_{2l-1}+y_{2l}-2y_{2l-1}y_{2l}\right| \qquad (\text{since } |y_i| \stackrel{\eqref{eq:yi_monotone}}{\le} |y_0|\le \frac{1}{2}) \nonumber\\
&=& 
6y_0+8\sum_{l=1}^\infty \left|y_{2l-1}+y_{2l}(1-2y_{2l-1})\right| \nonumber\\
&\stackrel{\eqref{eq:yi+1}}{=}&
6y_0+8\sum_{l=1}^\infty \left|y_{2l-1}+\left(1-\frac{2}{h_{2l-1}}\right)y_{2l-1}(1-2y_{2l-1})\right| \nonumber\\
&=&
6y_0+16\sum_{l=1}^\infty \left|y_{2l-1}\left(1-\frac{1}{h_{2l-1}}\right)+y_{2l-1}^2\left(\frac{2}{h_{2l-1}}-1\right)\right| \nonumber\\
&\le&
6y_0+16\sum_{l=1}^\infty \left(|y_{2l-1}|\left|\frac{h_{2l-1}-1}{h_{2l-1}}\right|+y_{2l-1}^2\left|\frac{2}{h_{2l-1}}-1\right|\right) \nonumber\\
&\stackrel{\eqref{eq:auxiliary_estimate_sum}}{<}&
2118y_0.\label{eq:ai_estimate}
\end{eqnarray}
Note that the second summand in the second line of the chain of (in-)equalities
above is redundant if $i$ is odd. Similarly,
\begin{eqnarray}
|\beta_i| 
&\stackrel{\eqref{eq:dstart},\eqref{eq:dbi}}{=}& 
\left|-\frac{y_0}{1+y_0}-\sum_{j=1}^{i-1} \frac{2y_j}{1+y_j}\right|\nonumber\\
&\le& 
\left|\frac{y_0}{1+y_0}\right|+2\left|\frac{y_{i-i}}{1+y_{i-1}}\right|+2 \sum_{l=1}^\infty \left|\frac{y_{2l-1}}{1+y_{2l-1}}+\frac{y_{2l}}{1+y_{2l}}\right|\nonumber\\
&=& 
\left|\frac{y_0}{1+y_0}\right|+2\left|\frac{y_{i-i}}{1+y_{i-1}}\right|+2 \sum_{l=1}^\infty \left|\frac{y_{2l-1}+y_{2l}+2y_{2l-1}y_{2l}}{(1+y_{2l-1})(1+y_{2l})}\right|\nonumber\\
&\le& 
6y_0+8\sum_{l=1}^\infty \left|y_{2l-1}+y_{2l}+2y_{2l-1}y_{2l}\right| \qquad (\text{since } |y_i| \stackrel{\eqref{eq:yi_monotone}}{\le} |y_0|\le \frac{1}{2}) \nonumber\\
&=& 
6y_0+8\sum_{l=1}^\infty \left|y_{2l-1}+y_{2l}(1+2y_{2l-1})\right| \nonumber\\
&\stackrel{\eqref{eq:yi+1}}{=}&
6y_0+8\sum_{l=1}^\infty \left|y_{2l-1}+\left(1-\frac{2}{h_{2l-1}}\right)y_{2l-1}(1+2y_{2l-1})\right| \nonumber\\
&=&
6y_0+16\sum_{l=1}^\infty \left|y_{2l-1}\left(1-\frac{1}{h_{2l-1}}\right)-y_{2l-1}^2\left(\frac{2}{h_{2l-1}}-1\right)\right| \nonumber\\
&\le&
6y_0+16\sum_{l=1}^\infty \left(|y_{2l-1}|\left|\frac{h_{2l-1}-1}{h_{2l-1}}\right|+y_{2l-1}^2\left|\frac{2}{h_{2l-1}}-1\right|\right) \nonumber\\
&\stackrel{\eqref{eq:auxiliary_estimate_sum}}{<}&
2118y_0.\label{eq:bi_estimate}
\end{eqnarray}

The claim for $|y_i|$ is already given by \eqref{eq:yi_monotone}.

For $|\xi_i|$, recall the construction of the distorted tiling (see Figure~\ref{fig:streifen-bez}). The pentagon with vertices $(0,1)^T$, $(a_i,1)^T$, $(x_i,y_i)^T$, $(b_i,-1)^T$ and $(0,-1)^T$ has area $4i-1$, because it is composed of two halve and $4i-2$ complete triangles of area $1$. Computing the area of that polygon as the sum of two areas of trapezoids with horizontal parallel edges gives
\[
4i-1=(1-y_i)\frac{a_i+x_i}{2}+(1+y_i)\frac{x_i+b_i}{2}.
\]
By $a_i=(2i-1)+\alpha_i$, $b_i=(2i-1)+\beta_i$ and $x_i=2i+\xi_i$, this yields
\[
\xi_i=\frac{1}{2}((\alpha_i-\beta_i)y_i-(\alpha_i+\beta_i)).
\]
Now, by \eqref{eq:ai_estimate}, \eqref{eq:bi_estimate} and \eqref{eq:yi_monotone}, we see that $|\xi_i|<\varepsilon$ if $y_0>0$ is sufficiently small.
\end{proof}

\begin{rem} \label{rem:sqrt3}
In \cite{fr} we have chosen initial values $y_0$ that are larger than and close to $\frac{1}{\sqrt{3}}$. Then the sequence $(y_i)_{i=0}^\infty$ is positive, monotonous and tends to $0$, but one does not obtain such strong control on the deviations $|\alpha_i|$, $|\beta_i|$, $|y_i|$ and $|\xi_i|$ as in Lemma~\ref{lem:small_deviations}. A corresponding tiling is shown in Figure~\ref{fig:streifen-monotone}. 
\begin{figure}
\begin{center}
\begin{tikzpicture}
\draw[dashed] 
  (0,1) -- (0,-1)
  ;
  
\fill
  (0,1) circle (.5mm)
  (0,.7) circle (.5mm)
  (0,0) circle (.5mm)
  (0,-1) circle (.5mm)
  ;

\draw ( 0 , 0.7 ) -- ( 3.33333333333 , 1 ) -- ( 1.34228187919 , 0.220805369128 ) -- ( 0.588235294118 , -1 ) -- ( 0 , 0.7 ) -- ( 1.34228187919 , 0.220805369128 );
\draw ( 1.34228187919 , 0.220805369128 ) -- ( 5.90008613264 , 1 ) -- ( 3.08284604993 , 0.0795615359843 ) -- ( 2.22649807587 , -1 ) -- ( 1.34228187919 , 0.220805369128 ) -- ( 3.08284604993 , 0.0795615359843 );
\draw ( 3.08284604993 , 0.0795615359843 ) -- ( 8.07296360158 , 1 ) -- ( 4.98203667813 , 0.0290793794627 ) -- ( 4.07910205752 , -1 ) -- ( 3.08284604993 , 0.0795615359843 ) -- ( 4.98203667813 , 0.0290793794627 );
\draw ( 4.98203667813 , 0.0290793794627 ) -- ( 10.1328642337 , 1 ) -- ( 6.94415799751 , 0.0106481759334 ) -- ( 6.02258672927 , -1 ) -- ( 4.98203667813 , 0.0290793794627 ) -- ( 6.94415799751 , 0.0106481759334 );
\draw 
  ( 6.94415799751 , 0.0106481759334 ) -- (11.5,.88) 
  (11.5,.8) -- ( 8.9301459945 , 0.00390007834806 ) -- ( 8.00151475548 , -1 ) -- ( 6.94415799751 , 0.0106481759334 ) -- ( 8.9301459945 , 0.00390007834806 )
  ;
\draw 
  ( 8.9301459945 , 0.00390007834806 ) -- (11.5,.495) 
  (11.5,.18) -- ( 10.9249946765 , 0.00142851870582 ) -- ( 9.99374490182 , -1 ) -- ( 8.9301459945 , 0.00390007834806 ) -- ( 10.9249946765 , 0.00142851870582 );
\draw
  (10.9249946765 , 0.00142851870582 ) -- (11.5,.114)
  ;
\draw 
  ( 10.9249946765 , 0.00142851870582 ) -- (11.5,0.001)
  ( 10.9249946765 , 0.00142851870582 ) -- (11.5,-.54)
  ;
  
\draw
  (0,1) -- (11.5,1) 
  (0,-1) -- (11.5,-1)
  (-.1,0)--(.1,0)

  (0,1) node[left] {$(0,1)^T$}
  (0,.57) node[left] {$(0,7/10)^T$}
  (0,-.0) node[left] {$(0,0)^T$}
  (0,-1) node[left] {$(0,-1)^T$}
  ;

\end{tikzpicture}
\end{center}

\caption{Suitable values $y_0>\frac{1}{\sqrt{3}}$ yield tilings of the strip $S^+$
where $y_i \downarrow 0$ monotonically as $i \to \infty$.\label{fig:streifen-monotone}}
\end{figure}
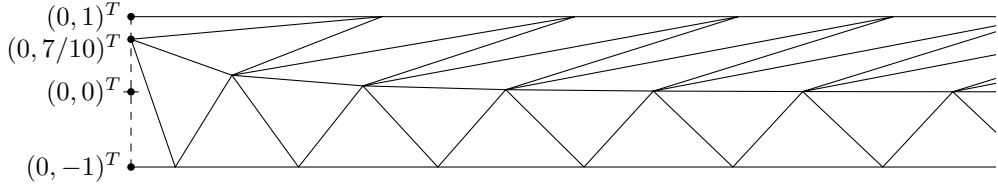
For $0 < y_0 < \frac{1}{\sqrt{3}}$, numerical evidence shows
that the behaviour of the resulting tilings is similar to Figure 
\ref{fig:streifen-bez}.
The critical case $y_0=\frac{1}{\sqrt{3}}$ is considered separately in Lemma~\ref{lem:1/sqrt(3)} and Figure~\ref{fig:1/sqrt(3)} below.
\end{rem}

Next we study congruence relations between triangles from tilings $\T=\T(y_0)$ of the strip $S$. We write $\simeq$ for congruence under the subgroup of all translations and all rotations by an angle of $\pi$. That is, two sets $A,B \in \mathbb{R}^2$ satisfy $A \simeq B$ if and only if there exist $s \in \{\pm 1\}$ and $t_1,t_2 \in \mathbb{R}$ such that $B=sA+(t_1,t_2)^T$. (Recall that we use $A \cong B$ for usual congruence under Euclidean isometries including reflections).
  
We start with an observation on the tiling $\T^\ast=\T\big(\frac{1}{\sqrt{3}}\big)$ of $S$ based on the parameter $y_0=\frac{1}{\sqrt{3}}$. The respective triangles are denoted by $T_i^{\ast j}=T_i^j\big(\frac{1}{\sqrt{3}}\big)$, see Figure~\ref{fig:1/sqrt(3)}. Here $T^{*j}_{-i}$ denotes the image of
$T^{*j}_i$ under reflection through the vertical axis.

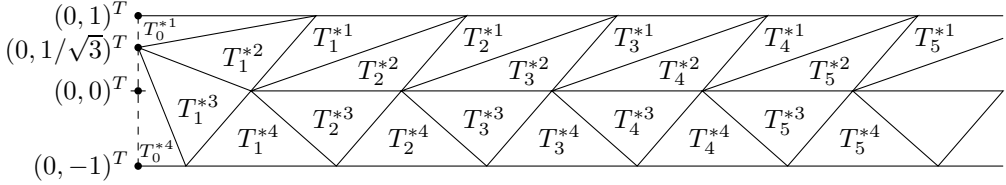
\begin{figure}
\begin{center}
\begin{tikzpicture}
\draw[dashed] 
  (0,1) -- (0,-1)
  ;
  
\fill
  (0,1) circle (.5mm)
  (0,.577) circle (.5mm)
  (0,0) circle (.5mm)
  (0,-1) circle (.5mm)

  ;

\draw ( 0 , 0.57735026919 ) -- ( 2.36602540378 , 1 ) -- ( 1.5 , 0 ) -- ( 0.633974596216 , -1 ) -- ( 0 , 0.57735026919 ) -- ( 1.5 , 0 );
\draw ( 1.5 , 0 ) -- ( 4.36602540378 , 1 ) -- ( 3.5 , 0 ) -- ( 2.63397459622 , -1 ) -- ( 1.5 , 0 ) -- ( 3.5 , 0 );
\draw ( 3.5 , 0 ) -- ( 6.36602540378 , 1 ) -- ( 5.5 , 0 ) -- ( 4.63397459622 , -1 ) -- ( 3.5 , 0 ) -- ( 5.5 , 0 );
\draw ( 5.5 , 0 ) -- ( 8.36602540378 , 1 ) -- ( 7.5 , 0 ) -- ( 6.63397459622 , -1 ) -- ( 5.5 , 0 ) -- ( 7.5 , 0 );
\draw ( 7.5 , 0 ) -- ( 10.3660254038 , 1 ) -- ( 9.5 , 0.0 ) -- ( 8.63397459622 , -1 ) -- ( 7.5 , 0 ) -- ( 9.5 , 0.0 );
\draw  ( 9.5 , 0.0 ) -- ( 11.5 , 0.0 ) -- ( 10.6339745962 , -1 ) -- ( 9.5 , 0.0 ) -- ( 11.5 , 0.7 );

\draw
  (0,1) -- (11.5,1) 
  (0,-1) -- (11.5,-1)
  (-.1,0)--(.1,0)
  (0,1) node[left] {$(0,1)^T$}
  (0,.55) node[left] {$(0,1/\sqrt{3})^T$}
  (0,0) node[left] {$(0,0)^T$}
  (0,-1) node[left] {$(0,-1)^T$}
  (.3,.82) node {\scriptsize $T_0^{\ast 1}$}
  (.25,-.8) node {\scriptsize $T_0^{\ast 4}$}
  (2.6,.7) node {$T_1^{\ast 1}$}
  (1.4,.43) node {$T_1^{\ast 2}$}
  (.8,-.25) node {$T_1^{\ast 3}$}
  (1.6,-.65) node {$T_1^{\ast 4}$}
  (4.6,.7) node {$T_2^{\ast 1}$}
  (3.2,.26) node {$T_2^{\ast 2}$}
  (2.6,-.4) node {$T_2^{\ast 3}$}
  (3.6,-.65) node {$T_2^{\ast 4}$}
  (6.6,.7) node {$T_3^{\ast 1}$}
  (5.2,.26) node {$T_3^{\ast 2}$}
  (4.6,-.4) node {$T_3^{\ast 3}$}
  (5.6,-.65) node {$T_3^{\ast 4}$}
  (8.6,.7) node {$T_4^{\ast 1}$}
  (7.2,.26) node {$T_4^{\ast 2}$}
  (6.6,-.4) node {$T_4^{\ast 3}$}
  (7.6,-.65) node {$T_4^{\ast 4}$}
  (10.6,.7) node {$T_5^{\ast 1}$}
  (9.2,.26) node {$T_5^{\ast 2}$}
  (8.6,-.4) node {$T_5^{\ast 3}$}
  (9.6,-.65) node {$T_5^{\ast 4}$}
  ;

\end{tikzpicture}
\end{center}
\caption{The tiling $\T^\ast$ of $S$ with parameter $y_0=\frac{1}{\sqrt{3}}$.
  The six congruence classes of tiles in $\T^*$ are $[T_0^{*1}],  [T_0^{*4}],  [T_1^{*2}],
  [T_1^{*3}]$ (one element each), $[T_1^{*1}]=[T_2^{*2}],  [T_1^{*4}]=[T_2^{*3}]$.
  \label{fig:1/sqrt(3)}}
\end{figure}

\begin{lem}\label{lem:1/sqrt(3)}
The coordinates of the tiling $\T^\ast$ are $x_0=0$, $y_0=\frac{1}{\sqrt{3}}$ and $x_i=2i-\frac{1}{2}$, $y_i=0$, $a_i=2i+\frac{\sqrt{3}-1}{2}$, $b_i=2i-\frac{\sqrt{3}+1}{2}$ for $i \ge 1$. Every triangle of $\T^\ast$ is congruent to one of $T_0^{\ast 1}$, $T_1^{\ast 1}$,  $T_1^{\ast 2}$, $T_1^{\ast 3}$, $T_0^{\ast 4}$ and $T_1^{\ast 4}$.
Moreover,
\begin{equation}\label{eq:simeq_ast}
T_i^{\ast j} \not \simeq T_{-i}^{\ast j} \quad\mbox{ for }\quad i=1,2,\ldots,\, j=1,2,3,4.
\end{equation}
\end{lem}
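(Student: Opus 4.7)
The plan is to address the three assertions of the lemma separately: the closed-form coordinates, the classification into six congruence classes, and the non-equivalence \eqref{eq:simeq_ast} under $\simeq$.

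\emph{Explicit coordinates.} I would proceed by induction on $i$. Starting from $y_0=\frac{1}{\sqrt{3}}$, equation \eqref{eq:start} gives $a_1=\sqrt{3}/(\sqrt{3}-1)=(3+\sqrt{3})/2=2+(\sqrt{3}-1)/2$ and $b_1=(3-\sqrt{3})/2=2-(\sqrt{3}+1)/2$. The crucial observation is that, by \eqref{eq:denominator}, the common denominator appearing in \eqref{eq:xi} and \eqref{eq:yi} equals $2(-4i+3+a_i+b_i)$; under the claimed closed forms one has $a_i+b_i=4i-1$ for every $i\ge 1$, so this denominator is constantly $4$. The sum $a_1+b_1=3$ handles the base case, and then \eqref{eq:rek-xi-yi} yields $y_1=y_0-4y_0/4=0$ and $x_1=2-2\sqrt{3}(1/\sqrt{3})/4=3/2$. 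For the inductive step, $y_{i-1}=0$ collapses \eqref{eq:rek-xi-yi}, \eqref{eq:ai} and \eqref{eq:bi} to $x_i=x_{i-1}+2$, $y_i=0$, $a_{i+1}=a_i+2$, $b_{i+1}=b_i+2$, which reproduces the closed forms and preserves $a_i+b_i=4i-1$.

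\emph{Classification into six congruence classes.} From the explicit coordinates, the restriction of $\T^\ast$ to the half-plane $\{x\ge x_1\}=\{x\ge 3/2\}$ is invariant under the translation $\tau:(x,y)\mapsto(x+2,y)$, so $T_{i+1}^{\ast j}=\tau T_i^{\ast j}$ for every $i\ge 1$ and $j\in\{1,4\}$, and for every $i\ge 2$ when $j\in\{2,3\}$. The tiles $T_1^{\ast 2}$ and $T_1^{\ast 3}$ form singleton classes because they incorporate the off-grid vertex $(0,1/\sqrt{3})$, which gives them distinctive side lengths (involving radicals such as $\sqrt{31/12}$) not present in any other tile. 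Computing the three side lengths of $T_1^{\ast 1}$ and $T_2^{\ast 2}$ I obtain the common triple $\{2,\sqrt{7}/2,\sqrt{(23+8\sqrt{3})/4}\}$ (geometrically, the two triangles share the edge from $(3/2,0)$ to $((7+\sqrt{3})/2,1)$ and are reflections across it), so $T_1^{\ast 1}\cong T_2^{\ast 2}$ by SSS; the analogous check gives $T_1^{\ast 4}\cong T_2^{\ast 3}$. Together with the boundary classes $[T_0^{\ast 1}]$ and $[T_0^{\ast 4}]$ and the mirror symmetry of $\T^\ast$ pairing $T_i^{\ast j}$ with $T_{-i}^{\ast j}$, every tile is congruent to exactly one of the six representatives listed.

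\emph{Proof of \eqref{eq:simeq_ast}.} Let $\sigma(x,y)=(-x,y)$; by the mirror symmetry of $\T^\ast$ we have $T_{-i}^{\ast j}=\sigma T_i^{\ast j}$. If $T_i^{\ast j}\simeq T_{-i}^{\ast j}$, then some $\rho$ in the group generated by translations and $\pi$-rotations satisfies $\rho T_i^{\ast j}=\sigma T_i^{\ast j}$, so $\sigma\rho\in\operatorname{Stab}(T_i^{\ast j})$. Since $\rho$ is orientation-preserving and $\sigma$ orientation-reversing, $\sigma\rho$ is an orientation-reversing symmetry of $T_i^{\ast j}$, i.e., a reflection across one of its axes of symmetry. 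But a direct side-length computation shows that each of $T_1^{\ast 1},T_1^{\ast 2},T_1^{\ast 3},T_1^{\ast 4}$ is scalene: $T_1^{\ast 1}$ has the distinct sides $2,\sqrt{7}/2,\sqrt{(23+8\sqrt{3})/4}$; $T_1^{\ast 2}$ has the distinct sides $\sqrt{31/12},\sqrt{7}/2,\sqrt{(26+5\sqrt{3})/6}$; and the remaining two are scalene by entirely analogous calculations (with $\sqrt{3}$ replaced by $-\sqrt{3}$ where appropriate). A scalene triangle has trivial stabilizer, contradicting the existence of $\sigma\rho$ and establishing \eqref{eq:simeq_ast}. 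The main obstacle is purely the bookkeeping in the side-length verification for all four representatives; conceptually the argument is a short orientation/stabilizer check.
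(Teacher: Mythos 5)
Your proof is correct and is essentially a fully detailed version of the paper's argument, which simply declares the lemma ``a direct consequence of \eqref{eq:start}--\eqref{eq:bi}'': the coordinate induction (via $a_i+b_i=4i-1$ and the collapse once $y_1=0$), the SSS identifications $T_1^{\ast 1}\cong T_2^{\ast 2}$, $T_1^{\ast 4}\cong T_2^{\ast 3}$, and the scalene/orientation-reversing-stabilizer argument for \eqref{eq:simeq_ast} all check out. One cosmetic slip: $T_1^{\ast 1}$ and $T_2^{\ast 2}$ are related by the point reflection in the midpoint of their common edge (equivalently $T_2^{\ast 2}=-T_1^{\ast 1}+t$, so in fact $T_1^{\ast 1}\simeq T_2^{\ast 2}$, consistent with the caption of Figure~\ref{fig:1/sqrt(3)}), not by the reflection across that edge; this does not affect the SSS conclusion you actually use.
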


\begin{proof}
This is a direct consequence of \eqref{eq:start}-\eqref{eq:bi}.
\end{proof}

\begin{lem}\label{lem:F(i,j,i',j')}
For $y_0 \in (0,1)$ we denote the triangles from the tiling $\T=\T(y_0)$ of $S$ by $T_i^j=T_i^j(y_0)$, $(i,j) \in I=\big((\mathbb{Z} \setminus \{0\}) \times \{1,2,3,4\}\big) \cup \{(0,1),(0,4)\}$. If $(i,j),(i',j') \in I$ are such that $T_i^{\ast j} \not\simeq T_{i'}^{\ast j'}$, then the set
\[
F(i,j,i',j')=\left\{y_0 \in (0,1)\middle|\, T_i^j(y_0) \simeq T_{i'}^{j'}(y_0)\right\}
\]
is finite.
\end{lem}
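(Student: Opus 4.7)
The plan is to use that all coordinates of the triangles $T_i^j(y_0)$ are rational functions of $y_0$, so that the condition $T_i^j(y_0) \simeq T_{i'}^{j'}(y_0)$ reduces to a finite disjunction of polynomial equations in $y_0$.

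A straightforward induction on $i\ge 0$ based on the recursions \eqref{eq:start}--\eqref{eq:bi} shows that $x_i, y_i, a_i, b_i$ all lie in the rational function field $\Q(y_0)$, since the right-hand sides of those formulas are rational expressions in quantities already known to be rational in $y_0$. Mirror symmetry across the vertical axis handles the indices $i<0$ by simply negating $x$-coordinates. Thus, for every $(i,j)\in I$, the three vertices of $T_i^j(y_0)$ are vectors whose entries belong to $\Q(y_0)$.

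Next I would unfold $\simeq$ combinatorially. Writing the vertices of $T_i^j(y_0)$ as $v_1(y_0), v_2(y_0), v_3(y_0)$ and those of $T_{i'}^{j'}(y_0)$ as $w_1(y_0), w_2(y_0), w_3(y_0)$, the relation $T_i^j(y_0)\simeq T_{i'}^{j'}(y_0)$ is witnessed by a sign $s\in\{\pm 1\}$ together with a bijection $\sigma$ of $\{1,2,3\}$ such that $w_{\sigma(k)}(y_0)-w_{\sigma(1)}(y_0)=s\bigl(v_k(y_0)-v_1(y_0)\bigr)$ for $k=2,3$; the translation is then forced. There are only $2\cdot 3!=12$ such combinations $(s,\sigma)$, and each gives four scalar identities between rational functions of $y_0$.

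For any combination for which all four identities were satisfied identically on $(0,1)$, they would in particular hold at $y_0=\frac{1}{\sqrt{3}}$, yielding $T_i^{\ast j}\simeq T_{i'}^{\ast j'}$ and contradicting the hypothesis. So for each of the twelve combinations at least one of the four identities is a nonzero element of $\Q(y_0)$, whose numerator is a nonzero polynomial having only finitely many real roots in $(0,1)$. Hence the set of $y_0$ for which that particular combination witnesses a congruence is finite, and taking the union over the twelve combinations shows that $F(i,j,i',j')$ is finite. The only subtlety is to discard the finitely many exceptional $y_0$ where a denominator along the recursion vanishes, which clearly does not affect the conclusion; the assumption $T_i^{\ast j}\not\simeq T_{i'}^{\ast j'}$ plays the sole role of excluding the one degenerate possibility that a candidate congruence is an identity in $y_0$.
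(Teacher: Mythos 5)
Your proposal is correct and follows essentially the same route as the paper's proof: express all vertex coordinates as rational functions of $y_0$ via the recursions, reduce $\simeq$ to the $12$ choices of sign and vertex matching (with the translation eliminated), and use the hypothesis $T_i^{\ast j}\not\simeq T_{i'}^{\ast j'}$ at $y_0=\frac{1}{\sqrt{3}}$ to conclude that each case contributes a non-trivial rational equation with finitely many roots.
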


\begin{proof}
We describe the triangles by their vertices, i.e., $T_i^j=\triangle\left((v^1_1,v^1_2)^T,(v^2_1,v^2_2)^T,(v^3_1,v^3_2)^T\right)$ and $T_{i'}^{j'}=\triangle\left((v'^1_1,v'^1_2)^T,(v'^2_1,v'^2_2)^T,(v'^3_1,v'^3_2)^T\right)$.
Assume that $T_i^j \simeq T_{i'}^{j'}$. Then there are $s \in \{\pm 1\}$ and $t_1,t_2 \in \mathbb{R}$ such that $T_{i'}^{j'}=sT_i^j+(t_1,t_2)^T$. The corresponding map $\varphi\big((x,y)^T\big)=s(x,y)^T+(t_1,t_2)^T$ induces a permutation $\pi$ of $\{1,2,3\}$ via $\varphi\left( (v^k_1,v^k_2)^T\right)=\big(v'^{\pi(k)}_1,v'^{\pi(k)}_2\big)^T$. Thus
\begin{equation}\label{eq:map}
s\left(v^k_1,v^k_2\right)^T+(t_1,t_2)^T=\big(v'^{\pi(k)}_1,v'^{\pi(k)}_2\big)^T \quad\mbox{ for }\quad k=1,2,3.
\end{equation}
Now we distinguish 12 situations depending on the choice of $s \in \{\pm 1\}$ and the permutation $\pi$.

\emph{Case 1: $s=1$ and $\pi$ is the identity. } From \eqref{eq:map} with $k=1$ we obtain $t_1=v'^1_1-v_1^1$ and $t_2=v'^1_2-v^1_2$. Substituting these into \eqref{eq:map} gives
\[ 
\left(v^k_1,v^k_2\right)^T+\left(v'^1_1-v_1^1,v'^1_2-v^1_2\right)^T=\left(v'^{k}_1,v'^{k}_2\right)^T \quad\mbox{ for }\quad k=1,2,3.
\]
These are six linear equations in terms of coordinates of vertices of $T_i^j$ and $T_{i'}^{j'}$. By \eqref{eq:start}-\eqref{eq:bi} these coordinates are rational functions of $y_0$. Since $T_i^{\ast j} \not\simeq T_{i'}^{\ast j'}$, at least one of these six equations fails when $y_0$ is replaced by $\frac{1}{\sqrt{3}}$. Hence that very equation is a non-trivial rational equation in $y_0$, that may have at most finitely many solutions $y_0$. Thus Case 1 applies to at most finitely many elements of $F(i,j,i',j')$.

\emph{Case 2: $s=-1$ and $\pi$ is the identity. } Now \eqref{eq:map} with $k=1$ gives
$t_1=v'^1_1+v_1^1$ and $t_2=v'^1_2+v^1_2$. We obtain
\[
-\left(v^k_1,v^k_2\right)^T+\left(v'^1_1+v_1^1,v'^1_2+v^1_2\right)^T=\left(v'^{k}_1,v'^{k}_2\right)^T \quad\mbox{ for }\quad k=1,2,3
\]
and follow the same arguments as above.
In the same way we see that each of the 12 cases yields only finitely many elements of  $F(i,j,i',j')$.
\end{proof}

\begin{lem}\label{lem:Ffinite}
Let $\frac{1}{3}\ge\delta > 0$ be as in Lemma~\ref{lem:auxiliary_estimates}. Then the set
\[
F=\left\{y_0 \in \left(0,\delta\right)\middle| \mbox{ There are distinct triangles } T,T' \in \T(y_0) \mbox{ such that } T \simeq T'.\right\}
\]
is at most countable. 
\end{lem}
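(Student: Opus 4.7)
My plan is to write $F$ as a countable union of finite sets. Since $I$ is countable, the ordered pairs $(i,j) \neq (i',j')$ in $I \times I$ form a countable set, and
\[
F \;\subseteq\; \bigcup_{\substack{(i,j),(i',j') \in I \\ (i,j) \neq (i',j')}} F(i,j,i',j') \cap (0,\delta).
\]
It therefore suffices to show each such term is finite.

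I would then handle two cases. When $T_i^{\ast j} \not\simeq T_{i'}^{\ast j'}$ in the tiling $\T^\ast$, Lemma~\ref{lem:F(i,j,i',j')} applies directly and gives $F(i,j,i',j')$ finite. When $T_i^{\ast j} \simeq T_{i'}^{\ast j'}$, Lemma~\ref{lem:F(i,j,i',j')} does not apply, because $y_0=\frac{1}{\sqrt{3}}$ is no longer a non-solution; here I would prove the stronger statement $F(i,j,i',j') \cap (0,\delta) = \emptyset$, i.e., no $\simeq$-equivalence survives among the small-$y_0$ distorted tilings.

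The main obstacle is the bad case. The approach uses Lemma~\ref{lem:auxiliary_estimates}, specifically \eqref{eq:yi_monotone}, which gives $|y_k(y_0)| \leq y_0 \leq \delta \leq \frac{1}{3}$ with $\operatorname{sign}(y_k) = (-1)^k$ and strict decrease in magnitude. Every tile $T_i^j \in \T(y_0)$ has its vertices either on the boundary lines $y = \pm 1$ of $S$ or on the central broken line, with multiplicities depending on $j$. An isometry $\varphi(v) = sv + (t_1,t_2)^T$, $s \in \{\pm 1\}$, taking $T_i^j(y_0)$ onto $T_{i'}^{j'}(y_0)$ must send boundary vertices to boundary vertices, which forces $t_2 \in \{0, \pm 2\}$. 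Matching a central vertex then yields $sy_k + t_2 = y_{k'}$ with $k \in \{i-1,i\}$ and $k' \in \{i'-1,i'\}$. Since $|y_k|, |y_{k'}| \leq \frac{1}{3}$, the possibilities $t_2 = \pm 2$ are immediately ruled out, leaving $y_{k'} = sy_k$. For $s = 1$ this gives $y_k = y_{k'}$, hence $|y_k| = |y_{k'}|$ and so $k = k'$ by strict monotonicity; for $s = -1$ one gets $y_k = -y_{k'}$, so $|y_k| = |y_{k'}|$ and $k = k'$, but then $y_k = -y_k$ forces $y_k = 0$, contradicting $y_0 > 0$. Tracking which central index of $T_i^j$ matches which of $T_{i'}^{j'}$ in each of the twelve sub-cases $(s,\pi) \in \{\pm 1\} \times S_3$, and then comparing the boundary vertices, one checks that consistency across the resulting equations forces $(i,j) = (i',j')$, contradicting distinctness. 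This case-by-case verification is the tedious but necessary book-keeping. Hence in the bad case $F(i,j,i',j') \cap (0,\delta) = \emptyset$, and $F$ is a countable union of (finite or empty) sets, thus at most countable.
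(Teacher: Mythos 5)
Your decomposition of $F$ into the countable union of the sets $F(i,j,i',j')\cap(0,\delta)$ is exactly the paper's starting point, and your split into pairs with $T_i^{\ast j}\not\simeq T_{i'}^{\ast j'}$ (where Lemma~\ref{lem:F(i,j,i',j')} gives finiteness) versus the remaining ``bad'' pairs (where you show emptiness) is sound: one can check that the $\simeq$-classes of $\T^\ast$ consist, besides a few singletons, of the families $\{T_i^{\ast 1}:i\ge 1\}\cup\{T_i^{\ast 2}:i\ge 2\}$, $\{T_i^{\ast 4}:i\ge 1\}\cup\{T_i^{\ast 3}:i\ge 2\}$ and their mirror images, and for all pairs inside these families the set is indeed empty. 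Where you differ from the paper is in the invariant used for the emptiness argument. The paper partitions the pairs by their structural type ($j$ versus $j'$, $|i|$ versus $|i'|$) and compares the \emph{vertical width} of the triangles (and, in the delicate subcase $j=j'=2$, $|i|$ odd, $|i'|=|i|+1$, of individual edges); you instead compare the full multiset of vertex heights, using that heights transform by $h\mapsto sh+t_2$ and that \eqref{eq:yi_monotone} makes $k\mapsto y_k$ injective with all $y_k\ne 0$. Your invariant is finer and disposes of the paper's Subcase~4.1 in one stroke (the central index sets $\{|i|-1,|i|\}$ and $\{|i'|-1,|i'|\}$ must coincide, forcing $|i|=|i'|$), which is a genuine simplification. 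Two soft spots in your write-up: first, the assertion that ``boundary vertices map to boundary vertices'' is not literally true when the two tiles have different numbers of boundary vertices (e.g.\ $T_i^1$ versus $T_{i'}^2$); there the correct statement is that two source vertices share a height while no two target vertices do, and that mismatch \emph{is} the contradiction -- this needs to be said, since it replaces the paper's ``horizontal edge'' argument in Case~2. Second, the concluding ``book-keeping'' over the twelve $(s,\pi)$ sub-cases is only asserted; I checked that it does close (once $s=1$, $t_2=0$ and equal central index sets are established, the only surviving possibility within a bad family is $(i,j)=(i',j')$), but as written it is a deferral rather than a proof, and a referee would ask you to execute at least the representative cases.
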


\begin{proof}
Using notation from Lemma~\ref{lem:F(i,j,i',j')} we have
\[
F= \bigcup_{(i,j),(i',j') \in I, (i,j) \ne (i',j')} F(i,j,i',j') \cap \left(0,\delta\right).
\]
We shall see that all the sets $F(i,j,i',j') \cap \left(0,\delta\right)$ are finite. 

Let $y_0 \in \left(0,\delta\right)$. The \emph{vertical width} $\operatorname{vw}(T)$ of a triangle $T$ is the maximal distance between second coordinates of vertices of $T$. By Lemma~\ref{lem:auxiliary_estimates}, $(|y_i|)_{i \ge 0}$ is positive and strictly decreasing with $\operatorname{sign}(y_i)=(-1)^i$. We obtain
\[
\operatorname{vw}\left(T_i^j\right)
=\left\{ 
\begin{array}{ll}
1-y_{|i|}, & j=1,\\
1-y_{|i|-1}, & j=2,\,i \text{ even},\\
1-y_{|i|}, & j=2,\,i \text{ odd},\\
1+y_{|i|}, & j=3,\,i \text{ even},\\
1+y_{|i|-1}, & j=3,\,i \text{ odd},\\
1+y_{|i|}, & j=4.
\end{array}
\right.
\]
See Figure~\ref{fig:streifen-bez} for an illustration. 
Note that all numbers $1\pm y_{|i|}$ are different and that $T_i^j \simeq T_{i'}^{j'}$ implies $\operatorname{vw}\left(T_i^j\right)=\operatorname{vw}\left(T_{i'}^{j'}\right)$. 

Now assume that $y_0 \in F(i,j,i',j')$, i.e., $T_i^j \simeq T_{i'}^{j'}$.

\emph{Case 1: $j \in \{1,2\}$ and $j' \in \{3,4\}$ (resp.\ $j' \in \{1,2\}$ and $j \in \{3,4\}$). }
We see that $\operatorname{vw}\left(T_i^j\right) \ne \operatorname{vw}\left(T_{i'}^{j'}\right)$, which contradicts $T_i^j \simeq T_{i'}^{j'}$.
Hence $F(i,j,i',j') \cap \left(0,\delta\right)$ is empty.

\emph{Case 2: $\{j,j'\}=\{1,2\}$ or $\{j,j'\}=\{3,4\}$. } Then one of $T_i^j$ and $T_{i'}^{j'}$ has a horizontal edge and the other one has not, again a contradiction to $T_i^j \simeq T_{i'}^{j'}$. So $F(i,j,i',j') \cap \left(0,\delta\right)$ is empty, too.

\emph{Case 3: $j=j' \in \{1,4\}$ and $|i| \ne |i'|$. } We can argue as in Case 1.

\emph{Case 4: $j=j'=2$ and $|i| \ne |i'|$. } 

\emph{Subcase 4.1: $|i|$ is odd and $|i'|=|i|+1$ (or, analogously, $|i'|$ is odd and $|i|=|i'|+1$). } Now we consider vertical widths of single edges of $T_i^j$ and $T_{i'}^{j'}$ (i.e., absolute differences between the second coordinates of their endpoints). The vertical width of the edge of $T_{i'}^{j'}=T_{i'}^2$ not touching the boundary of the strip $S$ is
\begin{equation}\label{eq:vw_edge}
\left|y_{|i'|-1}-y_{|i'|}\right|=\left|y_{|i|}-y_{|i|+1}\right|=\left|y_{|i|}\right|+\left|y_{|i|+1}\right|\stackrel{\eqref{eq:yi_monotone}}{<} 2 y_0 \stackrel{(y_0<\delta)}{<} \frac{2}{3}.
\end{equation}
The vertical widths of the three edges of $T_i^j=T_i^2$ are
\[
\left.
\begin{array}{l}
\left|y_{|i|-1}-y_{|i|}\right|=\left|y_{|i|-1}\right|+\left|y_{|i|}\right|>\left|y_{|i|}\right|+\left|y_{|i|+1}\right| \\[1ex]
\left|1-y_{|i|-1}\right| \ge 1-\left|y_{|i|-1}\right| \stackrel{\eqref{eq:yi_monotone}}{\ge} 1-\left|y_{0}\right|\stackrel{(y_0<\delta)}{>} \frac{2}{3} \\[1ex] 
\left|1-y_{|i|}\right| \ge 1-\left|y_{|i|}\right|
\stackrel{\eqref{eq:yi_monotone}}{\ge} 1-\left|y_{0}\right|\stackrel{(y_0<\delta)}{>} \frac{2}{3}
\end{array}
\right\}
\stackrel{\eqref{eq:vw_edge}}{\ge}\left|y_{|i'|-1}-y_{|i'|}\right|.
\]
We see that the vertical width of one edge of $T_{i'}^{j'}$ is different from that of all edges of $T_i^j$, a contradiction to $T_i^j \simeq T_{i'}^{j'}$. So $F(i,j,i',j') \cap \left(0,\delta\right)$ is empty.

\emph{Subcase 4.2: The assumption of Subcase 4.1 fails. } We can argue as in Case 1.

\emph{Case 5: $j=j'=3$ and $|i| \ne |i'|$. } We can argue as in Case 4.

\emph{Case 6: $j=j'$ and $|i|=|i'|$. } Since  $(i,j) \ne (i',j')$, we have $i'=-i \ne 0$. By \eqref{eq:simeq_ast} and Lemma~\ref{lem:F(i,j,i',j')}, $F(i,j,i',j')$ is finite. This completes the proof.
\end{proof}

Now we switch to tilings by almost equilateral triangles. For that, we apply the linear map $\left(\begin{smallmatrix} 1 & 0 \\ 0 & \sqrt{3} \end{smallmatrix}\right)$ to the strip $S$ and to the triangles constructed so far. The undistorted tiling $\overline{\T}^0$ of $\overline{S}=\left(\begin{smallmatrix} 1 & 0 \\ 0 & \sqrt{3} \end{smallmatrix}\right)S=\mathbb{R} \times \left[-\sqrt{3},\sqrt{3}\right]$ by equilateral triangles of edge length $2$ and area $\sqrt{3}$ is illustrated in Figure~\ref{fig:equilat_undistorted}.
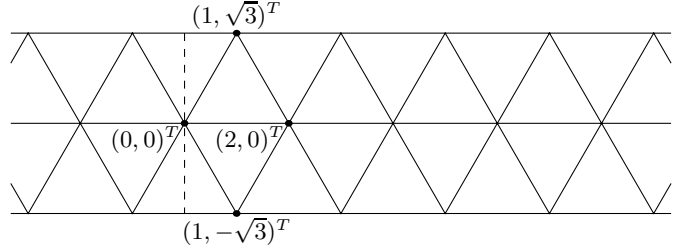
\begin{figure}
\begin{center}
\begin{tikzpicture}[xscale=.231,yscale=.2]

\draw[dashed]
  (0,6)--(0,-6)
  ;
  
\fill
  (0,0) circle (2.2mm)
  (3,6) circle (2.2mm)
  (3,-6) circle (2.2mm)
  (6,0) circle (2.2mm)
  ;
  
\draw 
  (-10,6)--(28,6)
  (-10,0)--(28,0)
  (-10,-6)--(28,-6)
  (-10,4)--(-9,6)--(-3,-6)--(3,6)--(9,-6)--(15,6)--(21,-6)--(27,6)--(28,4)
  (-10,-4)--(-9,-6)--(-3,6)--(3,-6)--(9,6)--(15,-6)--(21,6)--(27,-6)--(28,-4)

  (-2.25,-1.1) node {\small $(0,0)^T$}
  (3.75,-1.1) node {\small $(2,0)^T$}
  (3,7.2) node {\small $(1,\sqrt{3})^T$}
  (3,-7.2) node {\small $(1,-\sqrt{3})^T$}
  ;
\end{tikzpicture}
\end{center}
\caption{The tiling $\overline{\T}^0$ of the strip $\overline{S}$ by equilateral triangles of area $\sqrt{3}$.\label{fig:equilat_undistorted}}
\end{figure}

\begin{cor}\label{cor:T_eps}
For every $\varepsilon > 0$, there is a vertex-to-vertex tiling $\overline{\T}^\varepsilon$ of the strip $\overline{S}=\mathbb{R} \times \left[-\sqrt{3},\sqrt{3}\right]$ by triangles of area $\sqrt{3}$ that is $\varepsilon$-close to the undistorted tiling $\overline{\T}^0$ of $\overline{S}$ displayed in Figure~\ref{fig:equilat_undistorted} such that $T \not \simeq T'$ for all $T,T' \in \overline{\T}^\varepsilon$ with $T \ne T'$.
\end{cor}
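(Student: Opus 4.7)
The plan is to obtain $\overline{\T}^\varepsilon$ as the image of a carefully chosen tiling $\T(y_0)$ of $S$ under the linear map $L = \left(\begin{smallmatrix} 1 & 0 \\ 0 & \sqrt{3} \end{smallmatrix}\right)$. First I would record three structural facts about $L$: (i) it maps $S$ bijectively onto $\overline{S}$ and sends the undistorted unit-area tiling of $S$ onto $\overline{\T}^0$; (ii) it has determinant $\sqrt{3}$, so unit-area triangles become triangles of area $\sqrt{3}$, and being a linear bijection it preserves the vertex-to-vertex property; (iii) it intertwines the equivalence relation $\simeq$, because linearity gives $T' = sT + v$ if and only if $L(T') = sL(T) + L(v)$ for $s \in \{\pm 1\}$ and $v \in \R^2$. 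In particular, $L(T) \simeq L(T')$ if and only if $T \simeq T'$.

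Since $L$ stretches second coordinates by $\sqrt{3}$ and leaves first coordinates fixed, any tiling of $S$ that is $(\varepsilon/\sqrt{3})$-close (in the coordinate-wise sense of the paper's definition) to the undistorted tiling is mapped by $L$ to a tiling of $\overline{S}$ that is $\varepsilon$-close to $\overline{\T}^0$. Applying Lemma~\ref{lem:small_deviations} with $\varepsilon/\sqrt{3}$ in place of $\varepsilon$, I would obtain some $\delta_1 \in (0,\delta]$ (where $\delta$ is the constant from Lemma~\ref{lem:Ffinite}) such that, for every $0 < y_0 < \delta_1$, the tiling $\T(y_0)$ is $(\varepsilon/\sqrt{3})$-close to the undistorted tiling of $S$.

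Finally, by Lemma~\ref{lem:Ffinite} the set $F \cap (0, \delta_1)$ is at most countable, whereas $(0, \delta_1)$ is uncountable; hence I may pick any $y_0 \in (0, \delta_1) \setminus F$ and set $\overline{\T}^\varepsilon := L(\T(y_0))$. The four required properties, namely vertex-to-vertex structure, triangles of area $\sqrt{3}$, $\varepsilon$-closeness to $\overline{\T}^0$, and absence of two distinct tiles related by $\simeq$, then follow at once from facts (i)--(iii) about $L$ combined with the choice $y_0 \notin F$. No serious obstacle arises here; the corollary is essentially a clean repackaging of Lemmas~\ref{lem:small_deviations} and~\ref{lem:Ffinite} pulled through the linear rescaling $L$, and the only mildly nontrivial point is the observation that $\simeq$ is preserved by $L$ (which uses crucially that $\simeq$ allows only translations and $\pi$-rotations, both of which commute with the diagonal map $L$).
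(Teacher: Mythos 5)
Your proposal is correct and follows essentially the same route as the paper: pull everything through the diagonal map $\left(\begin{smallmatrix} 1 & 0 \\ 0 & \sqrt{3} \end{smallmatrix}\right)$, which preserves the vertex-to-vertex property and the relation $\simeq$, invoke Lemma~\ref{lem:small_deviations} for closeness and Lemma~\ref{lem:Ffinite} plus uncountability of the interval to choose $y_0 \notin F$. Your explicit bookkeeping of the $\varepsilon/\sqrt{3}$ rescaling of the second coordinates is a touch more careful than the paper's wording, but it is the same argument.
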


\begin{proof}
The linear map $\left(\begin{smallmatrix} 1 & 0 \\ 0 & \sqrt{3} \end{smallmatrix}\right)$ transforms $S$ into $\overline{S}$ and the tiling from Figure~\ref{fig:streifen-ungestoert} into that from Figure~\ref{fig:equilat_undistorted}. Since $\left(\begin{smallmatrix} 1 & 0 \\ 0 & \sqrt{3} \end{smallmatrix}\right)T \simeq \left(\begin{smallmatrix} 1 & 0 \\ 0 & \sqrt{3} \end{smallmatrix}\right)T'$ if and only if $T \simeq T'$, if suffices to show the following:
for every $\varepsilon > 0$, there exists $y_0 > 0$ such that the tiling $\T(y_0)$ of $S$ satisfies $T \not \simeq T'$ for all $T,T' \in \T(y_0)$ with $T \ne T'$ and that all the corresponding absolute deviations $|\alpha_i|$, $|\beta_i|$, $|y_i|$ and $|\xi_i|$ are uniformly bounded by $\varepsilon$.

We fix $\delta \in \left(0,\frac{1}{3}\right]$ such that the claims of Lemmas~\ref{lem:small_deviations} and \ref{lem:Ffinite} are satisfied. By Lemma~\ref{lem:Ffinite}, we can fix $y_0 \in (0,\delta) \setminus F$. Then Lemma~\ref{lem:Ffinite} says that $T \not \simeq T'$ for all $T,T' \in \T(y_0)$ with $T \ne T'$.
Lemma~\ref{lem:small_deviations} shows that $|\alpha_i|$, $|\beta_i|$, $|y_i|$ and $|\xi_i|$ are bounded by $\varepsilon$.
\end{proof}

The final tiling of $\mathbb{R}^2$ will be obtained by stacking sheared copies $\left( \begin{smallmatrix}1 & \mu \\ 0 & 1 
\end{smallmatrix} \right) \overline{\T}^\varepsilon$ of $\overline{\T}^\varepsilon$, see Figure \ref{fig:final}.
\begin{figure}
\begin{center}
\begin{tikzpicture}[xscale=.4,yscale=.693]

\draw
  (11,4) node[right] {\Large $\Bigg\}\varphi_4\left(\overline{\T}_4^\varepsilon\right)$}
  (11,2) node[right] {\Large $\Bigg\}\varphi_2\left(\overline{\T}_2^\varepsilon\right)$}
  (11,0) node[right] {\Large $\Bigg\}\overline{\T}_1^\varepsilon$}
  (11,-2) node[right] {\Large $\Bigg\}\varphi_{3}\left(\overline{\T}_{3}^\varepsilon\right)$}
  (11,-4) node[right] {\Large $\Bigg\}\varphi_{5}\left(\overline{\T}_{5}^\varepsilon\right)$}
  ;

\begin{scope}
\clip (-11,-5) rectangle (11,5);

\begin{scope}[cm={1,0,.3,1,(0,0)}]
  \draw[densely dashed] 
  (0,1) -- (0,-1)
  ;
  \draw
  (-11.86,1) -- (11.86,1) 
  (-11.86,-1) -- (11.86,-1)

  ( 0 , 0.2 ) -- ( 1.25 , 1 ) -- ( 1.92307692308 , -0.169230769231 ) -- ( 2.96052631579 , 1 ) -- ( 3.88360118583 , 0.112523014511 ) -- ( 5.21410588202 , 1 ) -- ( 5.86782183927 , -0.0671455252767 ) -- ( 7.08826451609 , 1 ) -- ( 7.86284679278 , 0.0385390385326 ) -- ( 9.16843217775 , 1 ) -- ( 9.86102836451 , -0.021837269111 ) -- ( 11.1256909903 , 1 ) -- ( 11.8605645724 , 0.0123225275777 )

  ( 0 , 0.2 ) -- ( -1.25 , 1 ) -- ( -1.92307692308 , -0.169230769231 ) -- ( -2.96052631579 , 1 ) -- ( -3.88360118583 , 0.112523014511 ) -- ( -5.21410588202 , 1 ) -- ( -5.86782183927 , -0.0671455252767 ) -- ( -7.08826451609 , 1 ) -- ( -7.86284679278 , 0.0385390385326 ) -- ( -9.16843217775 , 1 ) -- ( -9.86102836451 , -0.021837269111 ) -- ( -11.1256909903 , 1 ) -- ( -11.8605645724 , 0.0123225275777 )

  ( 0 , 0.2 ) -- ( 0.833333333333 , -1 ) -- ( 1.92307692308 , -0.169230769231 ) -- ( 3.24074074074 , -1 ) -- ( 3.88360118583 , 0.112523014511 ) -- ( 5.03845636003 , -1 ) -- ( 5.86782183927 , -0.0671455252767 ) -- ( 7.18241348752 , -1 ) -- ( 7.86284679278 , 0.0385390385326 ) -- ( 9.1081956929 , -1 ) -- ( 9.86102836451 , -0.021837269111 ) -- ( 11.1528452556 , -1 ) -- ( 11.8605645724 , 0.0123225275777 )

  ( 0 , 0.2 ) -- ( -0.833333333333 , -1 ) -- ( -1.92307692308 , -0.169230769231 ) -- ( -3.24074074074 , -1 ) -- ( -3.88360118583 , 0.112523014511 ) -- ( -5.03845636003 , -1 ) -- ( -5.86782183927 , -0.0671455252767 ) -- ( -7.18241348752 , -1 ) -- ( -7.86284679278 , 0.0385390385326 ) -- ( -9.1081956929 , -1 ) -- ( -9.86102836451 , -0.021837269111 ) -- ( -11.1528452556 , -1 ) -- ( -11.8605645724 , 0.0123225275777 )
  
  ( 0 , 0.2 ) -- ( 1.92307692308 , -0.169230769231 ) -- ( 3.88360118583 , 0.112523014511 ) -- ( 5.86782183927 , -0.0671455252767 ) -- ( 7.86284679278 , 0.0385390385326 ) -- ( 9.86102836451 , -0.021837269111 ) -- ( 11.8605645724 , 0.0123225275777 )

  ( 0 , 0.2 ) -- ( -1.92307692308 , -0.169230769231 ) -- ( -3.88360118583 , 0.112523014511 ) -- ( -5.86782183927 , -0.0671455252767 ) -- ( -7.86284679278 , 0.0385390385326 ) -- ( -9.86102836451 , -0.021837269111 ) -- ( -11.8605645724 , 0.0123225275777 )
  ;
\end{scope}
  
\begin{scope}[cm={1,0,-.15,-1,(.45,2)}]
  \draw[densely dashed] 
  (0,1) -- (0,-1)
  ;
  \draw
  (-11.86,1) -- (11.86,1) 
  (-11.86,-1) -- (11.86,-1)

  ( 0 , 0.2 ) -- ( 1.25 , 1 ) -- ( 1.92307692308 , -0.169230769231 ) -- ( 2.96052631579 , 1 ) -- ( 3.88360118583 , 0.112523014511 ) -- ( 5.21410588202 , 1 ) -- ( 5.86782183927 , -0.0671455252767 ) -- ( 7.08826451609 , 1 ) -- ( 7.86284679278 , 0.0385390385326 ) -- ( 9.16843217775 , 1 ) -- ( 9.86102836451 , -0.021837269111 ) -- ( 11.1256909903 , 1 ) -- ( 11.8605645724 , 0.0123225275777 )

  ( 0 , 0.2 ) -- ( -1.25 , 1 ) -- ( -1.92307692308 , -0.169230769231 ) -- ( -2.96052631579 , 1 ) -- ( -3.88360118583 , 0.112523014511 ) -- ( -5.21410588202 , 1 ) -- ( -5.86782183927 , -0.0671455252767 ) -- ( -7.08826451609 , 1 ) -- ( -7.86284679278 , 0.0385390385326 ) -- ( -9.16843217775 , 1 ) -- ( -9.86102836451 , -0.021837269111 ) -- ( -11.1256909903 , 1 ) -- ( -11.8605645724 , 0.0123225275777 )

  ( 0 , 0.2 ) -- ( 0.833333333333 , -1 ) -- ( 1.92307692308 , -0.169230769231 ) -- ( 3.24074074074 , -1 ) -- ( 3.88360118583 , 0.112523014511 ) -- ( 5.03845636003 , -1 ) -- ( 5.86782183927 , -0.0671455252767 ) -- ( 7.18241348752 , -1 ) -- ( 7.86284679278 , 0.0385390385326 ) -- ( 9.1081956929 , -1 ) -- ( 9.86102836451 , -0.021837269111 ) -- ( 11.1528452556 , -1 ) -- ( 11.8605645724 , 0.0123225275777 )

  ( 0 , 0.2 ) -- ( -0.833333333333 , -1 ) -- ( -1.92307692308 , -0.169230769231 ) -- ( -3.24074074074 , -1 ) -- ( -3.88360118583 , 0.112523014511 ) -- ( -5.03845636003 , -1 ) -- ( -5.86782183927 , -0.0671455252767 ) -- ( -7.18241348752 , -1 ) -- ( -7.86284679278 , 0.0385390385326 ) -- ( -9.1081956929 , -1 ) -- ( -9.86102836451 , -0.021837269111 ) -- ( -11.1528452556 , -1 ) -- ( -11.8605645724 , 0.0123225275777 )
  
  ( 0 , 0.2 ) -- ( 1.92307692308 , -0.169230769231 ) -- ( 3.88360118583 , 0.112523014511 ) -- ( 5.86782183927 , -0.0671455252767 ) -- ( 7.86284679278 , 0.0385390385326 ) -- ( 9.86102836451 , -0.021837269111 ) -- ( 11.8605645724 , 0.0123225275777 )

  ( 0 , 0.2 ) -- ( -1.92307692308 , -0.169230769231 ) -- ( -3.88360118583 , 0.112523014511 ) -- ( -5.86782183927 , -0.0671455252767 ) -- ( -7.86284679278 , 0.0385390385326 ) -- ( -9.86102836451 , -0.021837269111 ) -- ( -11.8605645724 , 0.0123225275777 )
  ;
\end{scope}

\begin{scope}[cm={1,0,-.15,-1,(-.45,-2)}]
  \draw[densely dashed] 
  (0,1) -- (0,-1)
  ;
  \draw
  (-11.86,1) -- (11.86,1) 
  (-11.86,-1) -- (11.86,-1)

  ( 0 , 0.2 ) -- ( 1.25 , 1 ) -- ( 1.92307692308 , -0.169230769231 ) -- ( 2.96052631579 , 1 ) -- ( 3.88360118583 , 0.112523014511 ) -- ( 5.21410588202 , 1 ) -- ( 5.86782183927 , -0.0671455252767 ) -- ( 7.08826451609 , 1 ) -- ( 7.86284679278 , 0.0385390385326 ) -- ( 9.16843217775 , 1 ) -- ( 9.86102836451 , -0.021837269111 ) -- ( 11.1256909903 , 1 ) -- ( 11.8605645724 , 0.0123225275777 )

  ( 0 , 0.2 ) -- ( -1.25 , 1 ) -- ( -1.92307692308 , -0.169230769231 ) -- ( -2.96052631579 , 1 ) -- ( -3.88360118583 , 0.112523014511 ) -- ( -5.21410588202 , 1 ) -- ( -5.86782183927 , -0.0671455252767 ) -- ( -7.08826451609 , 1 ) -- ( -7.86284679278 , 0.0385390385326 ) -- ( -9.16843217775 , 1 ) -- ( -9.86102836451 , -0.021837269111 ) -- ( -11.1256909903 , 1 ) -- ( -11.8605645724 , 0.0123225275777 )

  ( 0 , 0.2 ) -- ( 0.833333333333 , -1 ) -- ( 1.92307692308 , -0.169230769231 ) -- ( 3.24074074074 , -1 ) -- ( 3.88360118583 , 0.112523014511 ) -- ( 5.03845636003 , -1 ) -- ( 5.86782183927 , -0.0671455252767 ) -- ( 7.18241348752 , -1 ) -- ( 7.86284679278 , 0.0385390385326 ) -- ( 9.1081956929 , -1 ) -- ( 9.86102836451 , -0.021837269111 ) -- ( 11.1528452556 , -1 ) -- ( 11.8605645724 , 0.0123225275777 )

  ( 0 , 0.2 ) -- ( -0.833333333333 , -1 ) -- ( -1.92307692308 , -0.169230769231 ) -- ( -3.24074074074 , -1 ) -- ( -3.88360118583 , 0.112523014511 ) -- ( -5.03845636003 , -1 ) -- ( -5.86782183927 , -0.0671455252767 ) -- ( -7.18241348752 , -1 ) -- ( -7.86284679278 , 0.0385390385326 ) -- ( -9.1081956929 , -1 ) -- ( -9.86102836451 , -0.021837269111 ) -- ( -11.1528452556 , -1 ) -- ( -11.8605645724 , 0.0123225275777 )
  
  ( 0 , 0.2 ) -- ( 1.92307692308 , -0.169230769231 ) -- ( 3.88360118583 , 0.112523014511 ) -- ( 5.86782183927 , -0.0671455252767 ) -- ( 7.86284679278 , 0.0385390385326 ) -- ( 9.86102836451 , -0.021837269111 ) -- ( 11.8605645724 , 0.0123225275777 )

  ( 0 , 0.2 ) -- ( -1.92307692308 , -0.169230769231 ) -- ( -3.88360118583 , 0.112523014511 ) -- ( -5.86782183927 , -0.0671455252767 ) -- ( -7.86284679278 , 0.0385390385326 ) -- ( -9.86102836451 , -0.021837269111 ) -- ( -11.8605645724 , 0.0123225275777 )
  ;
\end{scope}

\begin{scope}[cm={1,0,.075,1,(.675,4)}]
  \draw[densely dashed] 
  (0,1) -- (0,-1)
  ;
  \draw
  (-11.86,1) -- (11.86,1) 
  (-11.86,-1) -- (11.86,-1)

  ( 0 , 0.2 ) -- ( 1.25 , 1 ) -- ( 1.92307692308 , -0.169230769231 ) -- ( 2.96052631579 , 1 ) -- ( 3.88360118583 , 0.112523014511 ) -- ( 5.21410588202 , 1 ) -- ( 5.86782183927 , -0.0671455252767 ) -- ( 7.08826451609 , 1 ) -- ( 7.86284679278 , 0.0385390385326 ) -- ( 9.16843217775 , 1 ) -- ( 9.86102836451 , -0.021837269111 ) -- ( 11.1256909903 , 1 ) -- ( 11.8605645724 , 0.0123225275777 )

  ( 0 , 0.2 ) -- ( -1.25 , 1 ) -- ( -1.92307692308 , -0.169230769231 ) -- ( -2.96052631579 , 1 ) -- ( -3.88360118583 , 0.112523014511 ) -- ( -5.21410588202 , 1 ) -- ( -5.86782183927 , -0.0671455252767 ) -- ( -7.08826451609 , 1 ) -- ( -7.86284679278 , 0.0385390385326 ) -- ( -9.16843217775 , 1 ) -- ( -9.86102836451 , -0.021837269111 ) -- ( -11.1256909903 , 1 ) -- ( -11.8605645724 , 0.0123225275777 )

  ( 0 , 0.2 ) -- ( 0.833333333333 , -1 ) -- ( 1.92307692308 , -0.169230769231 ) -- ( 3.24074074074 , -1 ) -- ( 3.88360118583 , 0.112523014511 ) -- ( 5.03845636003 , -1 ) -- ( 5.86782183927 , -0.0671455252767 ) -- ( 7.18241348752 , -1 ) -- ( 7.86284679278 , 0.0385390385326 ) -- ( 9.1081956929 , -1 ) -- ( 9.86102836451 , -0.021837269111 ) -- ( 11.1528452556 , -1 ) -- ( 11.8605645724 , 0.0123225275777 )

  ( 0 , 0.2 ) -- ( -0.833333333333 , -1 ) -- ( -1.92307692308 , -0.169230769231 ) -- ( -3.24074074074 , -1 ) -- ( -3.88360118583 , 0.112523014511 ) -- ( -5.03845636003 , -1 ) -- ( -5.86782183927 , -0.0671455252767 ) -- ( -7.18241348752 , -1 ) -- ( -7.86284679278 , 0.0385390385326 ) -- ( -9.1081956929 , -1 ) -- ( -9.86102836451 , -0.021837269111 ) -- ( -11.1528452556 , -1 ) -- ( -11.8605645724 , 0.0123225275777 )
  
  ( 0 , 0.2 ) -- ( 1.92307692308 , -0.169230769231 ) -- ( 3.88360118583 , 0.112523014511 ) -- ( 5.86782183927 , -0.0671455252767 ) -- ( 7.86284679278 , 0.0385390385326 ) -- ( 9.86102836451 , -0.021837269111 ) -- ( 11.8605645724 , 0.0123225275777 )

  ( 0 , 0.2 ) -- ( -1.92307692308 , -0.169230769231 ) -- ( -3.88360118583 , 0.112523014511 ) -- ( -5.86782183927 , -0.0671455252767 ) -- ( -7.86284679278 , 0.0385390385326 ) -- ( -9.86102836451 , -0.021837269111 ) -- ( -11.8605645724 , 0.0123225275777 )
  ;
\end{scope}

\begin{scope}[cm={1,0,.075,1,(-.675,-4)}]
  \draw[densely dashed] 
  (0,1) -- (0,-1)
  ;
  \draw
  (-11.86,1) -- (11.86,1) 
  (-11.86,-1) -- (11.86,-1)

  ( 0 , 0.2 ) -- ( 1.25 , 1 ) -- ( 1.92307692308 , -0.169230769231 ) -- ( 2.96052631579 , 1 ) -- ( 3.88360118583 , 0.112523014511 ) -- ( 5.21410588202 , 1 ) -- ( 5.86782183927 , -0.0671455252767 ) -- ( 7.08826451609 , 1 ) -- ( 7.86284679278 , 0.0385390385326 ) -- ( 9.16843217775 , 1 ) -- ( 9.86102836451 , -0.021837269111 ) -- ( 11.1256909903 , 1 ) -- ( 11.8605645724 , 0.0123225275777 )

  ( 0 , 0.2 ) -- ( -1.25 , 1 ) -- ( -1.92307692308 , -0.169230769231 ) -- ( -2.96052631579 , 1 ) -- ( -3.88360118583 , 0.112523014511 ) -- ( -5.21410588202 , 1 ) -- ( -5.86782183927 , -0.0671455252767 ) -- ( -7.08826451609 , 1 ) -- ( -7.86284679278 , 0.0385390385326 ) -- ( -9.16843217775 , 1 ) -- ( -9.86102836451 , -0.021837269111 ) -- ( -11.1256909903 , 1 ) -- ( -11.8605645724 , 0.0123225275777 )

  ( 0 , 0.2 ) -- ( 0.833333333333 , -1 ) -- ( 1.92307692308 , -0.169230769231 ) -- ( 3.24074074074 , -1 ) -- ( 3.88360118583 , 0.112523014511 ) -- ( 5.03845636003 , -1 ) -- ( 5.86782183927 , -0.0671455252767 ) -- ( 7.18241348752 , -1 ) -- ( 7.86284679278 , 0.0385390385326 ) -- ( 9.1081956929 , -1 ) -- ( 9.86102836451 , -0.021837269111 ) -- ( 11.1528452556 , -1 ) -- ( 11.8605645724 , 0.0123225275777 )

  ( 0 , 0.2 ) -- ( -0.833333333333 , -1 ) -- ( -1.92307692308 , -0.169230769231 ) -- ( -3.24074074074 , -1 ) -- ( -3.88360118583 , 0.112523014511 ) -- ( -5.03845636003 , -1 ) -- ( -5.86782183927 , -0.0671455252767 ) -- ( -7.18241348752 , -1 ) -- ( -7.86284679278 , 0.0385390385326 ) -- ( -9.1081956929 , -1 ) -- ( -9.86102836451 , -0.021837269111 ) -- ( -11.1528452556 , -1 ) -- ( -11.8605645724 , 0.0123225275777 )
  
  ( 0 , 0.2 ) -- ( 1.92307692308 , -0.169230769231 ) -- ( 3.88360118583 , 0.112523014511 ) -- ( 5.86782183927 , -0.0671455252767 ) -- ( 7.86284679278 , 0.0385390385326 ) -- ( 9.86102836451 , -0.021837269111 ) -- ( 11.8605645724 , 0.0123225275777 )

  ( 0 , 0.2 ) -- ( -1.92307692308 , -0.169230769231 ) -- ( -3.88360118583 , 0.112523014511 ) -- ( -5.86782183927 , -0.0671455252767 ) -- ( -7.86284679278 , 0.0385390385326 ) -- ( -9.86102836451 , -0.021837269111 ) -- ( -11.8605645724 , 0.0123225275777 )
  ;
  \end{scope}
\end{scope}  
\end{tikzpicture}
\end{center}

\caption{Stacked images of $\overline{\T}_n^\varepsilon$, $n \ge 1$, tile the plane. The $\overline{\T}_n^\varepsilon$
are sheared copies of $\overline{\T}^\varepsilon$. The $\varphi_n(\overline{\T}_n^\varepsilon)$ are images of $\overline{\T}_n^\varepsilon$ under appropriate translations (and possibly reflections).
\label{fig:final}} 
\end{figure}
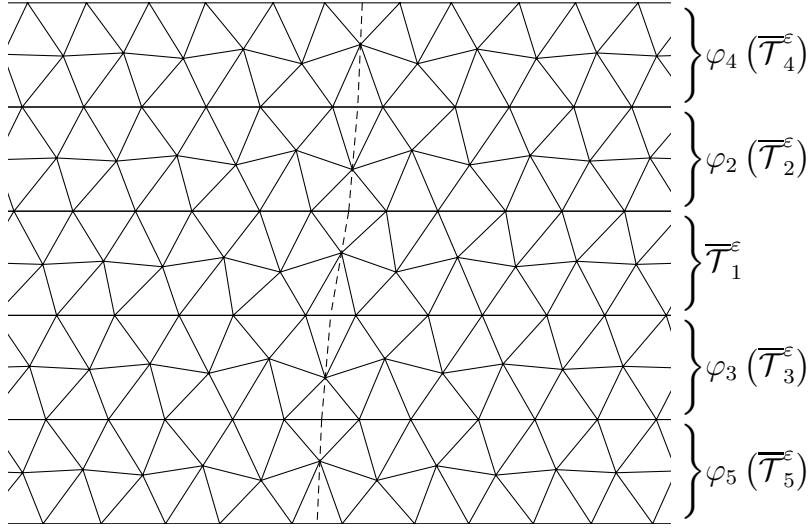
In order to make sure that almost every shear mapping of $\overline{\T}^\varepsilon$ produces (i) mutually incongruent triangles that are (ii) different from countably many 
prescribed shapes and (iii) not equilateral, we need the following result.

\begin{lem} \label{lem:kong-endlich}
Let $T$ and $T'$ be triangles.
\begin{enumerate}
\item[(a)] If $T \not\simeq T'$ then the set
$\left\{ \mu \in \R \mid \left( \begin{smallmatrix}1 & \mu \\ 0 & 1 
\end{smallmatrix} \right) T \cong \left( \begin{smallmatrix}1 & \mu \\ 0 & 1 
\end{smallmatrix} \right) T' \right\}$ is finite.
\item[(b)] The set $\left\{ \mu \in \R \mid
\left( \begin{smallmatrix}1 & \mu \\ 0 & 1 \end{smallmatrix} \right) T
\cong T' \right\}$ is finite.
\item[(c)] The set $\left\{ \mu \in \R \mid
\left( \begin{smallmatrix}1 & \mu \\ 0 & 1 \end{smallmatrix} \right) T
\text{ is equilateral.} \right\}$ is finite.
\end{enumerate}
\end{lem}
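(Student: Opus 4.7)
My plan rests on a single computation: for any vector $e = (\Delta x, \Delta y)^T$, the squared length of its shear image is
\[
\left\|\begin{pmatrix}1 & \mu \\ 0 & 1\end{pmatrix} e\right\|^2 = \mu^2 \Delta y^2 + 2\mu \Delta x \Delta y + \Delta x^2 + \Delta y^2,
\]
a polynomial in $\mu$ of degree at most $2$, with leading coefficient $\Delta y^2$. Since two triangles are congruent if and only if they share the same multiset of squared edge lengths, each of (a)--(c) will translate into a finite collection of polynomial equations in $\mu$, and in each case I only need to rule out that all of them vanish identically.

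For part (a), I would set $S_\mu = \left(\begin{smallmatrix}1 & \mu \\ 0 & 1\end{smallmatrix}\right)$ and enumerate the six bijections between the edge vectors $(e_1,e_2,e_3)$ of $T$ and $(e_1',e_2',e_3')$ of $T'$. A fixed bijection pairing $e_j \leftrightarrow e_{\pi(j)}'$ produces three polynomial identities $\|S_\mu e_j\|^2 = \|S_\mu e_{\pi(j)}'\|^2$; comparing the three coefficients, an individual identity holds for every $\mu$ precisely when $e_{\pi(j)}' = \pm e_j$. Were all three to hold, I would write $e_{\pi(j)}' = s_j e_j$ with $s_j \in \{\pm 1\}$ and use the relations $e_1 + e_2 + e_3 = 0$ and $e_1' + e_2' + e_3' = 0$ to observe that any mixed choice of signs forces some $e_j$ to vanish, contradicting non-degeneracy. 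Hence all $s_j$ agree, giving $T' = T + t$ or $T' = -T + t$, i.e., $T \simeq T'$, contradicting the hypothesis. Therefore, for each of the six bijections, at least one identity is a nontrivial polynomial in $\mu$ of degree at most $2$, contributing at most two exceptional $\mu$, so the whole set is finite.

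For part (b), the squared edge lengths of the fixed triangle $T'$ are constants, whereas each squared edge length of $S_\mu T$ is a polynomial with leading coefficient $\Delta y^2$. Since $T$ is non-degenerate, at least one of its edges has $\Delta y \ne 0$; under any of the six bijections, the matching equation for that edge is a nontrivial polynomial of degree at most $2$ in $\mu$ with at most two solutions, so the full set is finite. For part (c), I would note that $\det S_\mu = 1$, so the shear preserves area; therefore $S_\mu T$ being equilateral forces it to be congruent to a single fixed equilateral triangle $T^\triangle$ of area $\operatorname{area}(T)$, and the conclusion follows by applying (b) with $T' = T^\triangle$. The only nontrivial ingredient is the sign-consistency step in (a); once that is established, (b) and (c) are immediate consequences of polynomial degree counting.
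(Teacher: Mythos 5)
Your proof is correct, and its engine is the same as the paper's: the squared length $\left\|\left(\begin{smallmatrix}1&\mu\\0&1\end{smallmatrix}\right)e\right\|^2=\mu^2\Delta y^2+2\mu\,\Delta x\,\Delta y+\Delta x^2+\Delta y^2$ is a quadratic in $\mu$, so every congruence forces polynomial identities of which one only needs to exhibit a non-trivial one. The differences are in how that non-trivial identity is extracted. In (a) the paper uses the hypothesis $T\not\simeq T'$ to produce a single witness edge $e_0$ of $T$ that is a translate of no edge of $T'$ (after normalizing edge vectors by sign), and then bounds the bad set by the three quadratics $\|S_\mu e_0\|^2=\|S_\mu e_i\|^2$, giving at most six exceptional $\mu$; you instead enumerate all six edge bijections and, via the relation $e_1+e_2+e_3=0$, show that three simultaneously trivial identities would force consistent signs and hence $T\simeq T'$. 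Both work; your version is a little more systematic and makes explicit the sign-consistency argument that justifies the paper's witness-edge claim, at the cost of a slightly larger (still finite) bound. Part (b) is essentially identical in both treatments. In (c) your route is genuinely different and arguably cleaner: the paper argues directly by choosing two edges of $T$ with distinct $|\Delta y|$ (possible since the $\Delta y$'s of a nondegenerate triangle sum to zero and cannot all have equal absolute value) and equating their sheared lengths, whereas you exploit that the shear is area-preserving, so an equilateral image must be congruent to the one fixed equilateral triangle of area $\operatorname{area}(T)$, reducing (c) to an instance of (b). I see no gaps.
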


\begin{proof}
(a) Since $T \not\simeq T'$, there exists an 
edge $e_0$ of $T$ that is a translate of neither of the three 
edges $e_1,e_2,e_3$ of $T'$. Then
\[
\left\{ \mu \in \R \mid \left( \begin{smallmatrix}1 & \mu \\ 0 & 1 
\end{smallmatrix} \right) T \cong \left( \begin{smallmatrix}1 & \mu \\ 0 & 1 
\end{smallmatrix} \right) T' \right\} 
\subseteq \big\{ \mu \in \R \big| \left\|\left( \begin{smallmatrix}1 & \mu \\ 0 & 1 
\end{smallmatrix} \right) e_0 \right\| 
\in \left\{ \left\|\left( \begin{smallmatrix}1 & \mu \\ 0 & 1 
\end{smallmatrix} \right)e_i\right\| \mid i=1,2,3 \right\}
\big\}=H_1\cup H_2 \cup H_3
\]
with $H_i=\left\{ \mu \in \R \mid  \| \big( \begin{smallmatrix}1 & \mu \\ 0 & 1 
\end{smallmatrix} \big) e_0 \|^2 = \| \big( \begin{smallmatrix}1 & \mu \\ 0 & 1 
\end{smallmatrix} \big) e_i\|^2 \right\}$. We shall show that $|H_i|<\infty$.

Let $(x_i,y_i)^T$ be the vector joining the endpoints of edge $e_i$,
where $0 \le i \le 3$. Without loss of generality, $y_i \ge 0$ and $x_i > 0$ 
if $y_i=0$. Then, for $1 \le i \le 3$,
\begin{align*} 
H_i & = \left\{ \mu \in \R \mid (x_0+\mu y_0)^2 + y_0^2 = 
(x_i + \mu y_i)^2 + y_i^2 \right\} \\
&  = \left\{ \mu \in \R \mid \mu^2(y_0^2- y_i^2) + 
2 \mu(x_0y_0 - x_iy_i)+x_0^2 + y_0^2 -x_i^2 -y_i^2=0 \right\}.
\end{align*}
Either $y_0 \ne y_i$, then $y_0^2-y_i^2 \ne 0$ and the last equation has at most two 
solutions in $\mu$. Or $y_0=y_i$ and $x_0 \ne x_i$. In the latter case we may have $y_0=y_i\ne 0$, whence $x_0 y_0 - x_i y_i \ne 0$ and the equation in the 
set above has a unique solution. Otherwise $y_0=y_i=0$ and $x_0 \ne x_i$. Since $x_0,x_i > 0$, the equation gives the contradiction $x_0^2-x_i^2=0$, and $H_i$ is empty. Altogether we obtain $|H_i| \le 2$.

(b) Now let $e_0$ be an edge of $T$ such that the corresponding vector $(x_0,y_0)^T$
satisfies $y_0 \ne 0$. Denote the edges of $T'$ by $e_1,e_2,e_3$ 
as above. Then
\begin{align*}
\left\{ \mu \in \R \mid \left( \begin{smallmatrix}1 & \mu \\ 0 & 1 
\end{smallmatrix} \right) T \cong T' \right\}
&\subseteq  \left\{ \mu \in \R \mid \| \left( \begin{smallmatrix}1 & \mu \\ 0 & 1 
\end{smallmatrix} \right) e_0 \|^2 \in \left\{\|e_1\|^2, \|e_2\|^2, \|e_3\|^2 \right\} \right\} \\
&= \left\{ \mu \in \R \mid (x_0+\mu y_0)^2+y_0^2   
\in \left\{\|e_1\|^2, \|e_2\|^2, \|e_3\|^2 \right\} \right\} \\
&= \left\{ \mu \in \R \mid  \mu^2y_0^2 + 2 \mu x_0 y_0 +x_0^2+y_0^2
\in \left\{\|e_1\|^2, \|e_2\|^2, \|e_3\|^2 \right\} \right\}.
\end{align*}
Since $y_0 \ne 0$, the last term is quadratic in $\mu$ again. Thus the cardinality of the last set is at most six.

(c) Pick edge vectors $(x_i,y_i)^T$, $i=1,2$, of $T$ such that $|y_1| \ne |y_2|$. Then
\begin{align*}
\left\{ \mu \in \R \mid 
\left( \begin{smallmatrix}1 & \mu \\ 0 & 1 \end{smallmatrix} \right) T 
\text{ is equilateral.} \right\} & 
\subseteq
\left\{ \mu \in \R \mid
\left\|\left( \begin{smallmatrix}1 & \mu \\ 0 & 1 \end{smallmatrix} \right)\left(\begin{smallmatrix} x_1 \\ y_1 \end{smallmatrix}\right)\right\|^2=\left\|\left( \begin{smallmatrix}1 & \mu \\ 0 & 1 \end{smallmatrix} \right)\left(\begin{smallmatrix} x_2 \\ y_2 \end{smallmatrix}\right)\right\|^2 \right\} \\
&\hspace{-10ex}=
\left\{ \mu \in \R \mid \mu^2\left(y_1^2-y_2^2\right)+2 \mu (x_1y_1-x_2y_2)+x_1^2+y_1^2-x_2^2-y_2^2=0\right\}.
\end{align*}
Since $y_1^2\ne y_2^2$, the last set consists of at most two numbers.
\end{proof}

Now we can provide sheared images of the tiling $\overline{\T}^\varepsilon$ that shall be stacked in order to form a tiling of the plane.

\begin{cor}\label{cor:Tn_eps}
For every $\varepsilon > 0$, there exist sheared images $\overline{\T}_n^\varepsilon= \left( \begin{smallmatrix}1 & \mu_n \\ 0 & 1 
\end{smallmatrix} \right) \overline{\T}^\varepsilon$, $n=1,2,\ldots$, of the tiling $\overline{\T}^\varepsilon$ such that \begin{enumerate}
\item[(a)] $\sum_{n=1}^\infty 2\sqrt{3}|\mu_n|< \varepsilon$,
\item[(b)] for all $n \ge 1$, $\overline{\T}_n^\varepsilon$ does not contain two distinct congruent triangles,
\item[(c)] for all $1 \le n' < n$, there are no congruent triangles $T \in \overline{\T}_n^\varepsilon$ and $T' \in \overline{\T}_{n'}^\varepsilon$,
\item[(d)] for all $n \ge 1$, $\overline{\T}_n^\varepsilon$ does not contain equilateral triangles. 
\end{enumerate} 
\end{cor}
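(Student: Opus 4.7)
The plan is to construct the shear parameters $\mu_n$ one at a time by induction on $n$, using Lemma~\ref{lem:kong-endlich} at each step to reduce every forbidden behaviour to the exclusion of only countably many values of $\mu_n$. Since a countable subset of $\mathbb{R}$ has dense complement in every neighbourhood of $0$, I can always choose $\mu_n$ arbitrarily close to $0$ outside the forbidden set; imposing $2\sqrt{3}|\mu_n|<\varepsilon/2^n$ then secures the summability in (a).

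For the base case $n=1$ only (b) and (d) are relevant. By Corollary~\ref{cor:T_eps} any two distinct tiles $T,T'\in\overline{\T}^\varepsilon$ satisfy $T\not\simeq T'$, so Lemma~\ref{lem:kong-endlich}(a) rules out only finitely many $\mu_1$ for the pair $(T,T')$, and Lemma~\ref{lem:kong-endlich}(c) rules out only finitely many $\mu_1$ for each single $T$. Because $\overline{\T}^\varepsilon$ is countable, the union of these finite sets over all (pairs of) tiles is countable; I choose $\mu_1$ in its complement with $2\sqrt{3}|\mu_1|<\varepsilon/2$.

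For the induction step, assume that $\mu_1,\ldots,\mu_{n-1}$ have already been constructed. Conditions (b) and (d) for the prospective tiling $\overline{\T}_n^\varepsilon=\bigl(\begin{smallmatrix}1 & \mu_n\\ 0 & 1\end{smallmatrix}\bigr)\overline{\T}^\varepsilon$ are handled exactly as in the base case, since the shear is applied afresh to $\overline{\T}^\varepsilon$. The genuinely new ingredient is (c): for every $T\in\overline{\T}^\varepsilon$ and every already-fixed triangle $T'\in\overline{\T}_{n'}^\varepsilon$ with $1\le n'<n$, Lemma~\ref{lem:kong-endlich}(b) forbids at most finitely many $\mu_n$ with $\bigl(\begin{smallmatrix}1 & \mu_n\\ 0 & 1\end{smallmatrix}\bigr)T\cong T'$. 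Ranging $T,T',n'$ over countably many choices yields a countable forbidden set; I pick $\mu_n$ in its complement with $2\sqrt{3}|\mu_n|<\varepsilon/2^n$. Telescoping the geometric bounds gives $\sum_{n\ge 1}2\sqrt{3}|\mu_n|<\sum_{n\ge 1}\varepsilon/2^n=\varepsilon$, i.e.\ (a).

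I do not anticipate a serious obstacle, because Lemma~\ref{lem:kong-endlich} has already performed the analytic work of translating each non-congruence and non-equilaterality requirement into the finiteness of a concrete set of solutions of a quadratic equation in $\mu$; the corollary itself is purely a cardinality/bookkeeping argument that combines those finite sets via the countability of $\overline{\T}^\varepsilon$. The one point that deserves a brief check when writing the proof out is that for condition (d) the shear of a \emph{single} tile $T$ really can be equilateral only for finitely many $\mu$: this is exactly Lemma~\ref{lem:kong-endlich}(c), which needs two edge vectors of $T$ with $|y_1|\ne|y_2|$, a property automatic here because $\overline{\T}^\varepsilon$ is $\varepsilon$-close to the equilateral tiling $\overline{\T}^0$ for small~$\varepsilon$.
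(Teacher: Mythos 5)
Your proposal is correct and follows essentially the same route as the paper: an inductive choice of $\mu_n$ in the interval $\left(-\frac{2^{-n}}{2\sqrt{3}}\varepsilon,\frac{2^{-n}}{2\sqrt{3}}\varepsilon\right)$, avoiding the countable union of the finite exceptional sets supplied by Lemma~\ref{lem:kong-endlich}(a),(c) (for conditions (b) and (d), reusable at every step since the shear is always applied to $\overline{\T}^\varepsilon$) together with those from Lemma~\ref{lem:kong-endlich}(b) against the previously fixed tilings (for condition (c)). Your closing remark that every non-degenerate triangle has two edge vectors with distinct $|y_i|$ is a correct and harmless extra check that the paper leaves implicit.
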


\begin{proof}
We construct the tilings $\overline{\T}_n^\varepsilon$ by induction over $n$. Claim (a) will be obtained by choosing $\mu_n \in \left(-\frac{2^{-n}}{2\sqrt{3}}\varepsilon,\frac{2^{-n}}{2\sqrt{3}}\varepsilon\right)$.

Base case (construction of $\overline{\T}_1^\varepsilon$): By Corollary~\ref{cor:T_eps} and Lemma~\ref{lem:kong-endlich} (a) and (c), the set
\[
A= \bigcup_{T,T' \in \overline{\T}^\varepsilon,T \ne T'} \left\{\mu \in \mathbb{R} \mid
\left(\begin{smallmatrix}1 & \mu \\ 0 & 1 
\end{smallmatrix} \right)T\cong\left(\begin{smallmatrix}1 & \mu \\ 0 & 1 
\end{smallmatrix} \right)T' \text{ or } \left(\begin{smallmatrix}1 & \mu \\ 0 & 1 
\end{smallmatrix} \right)T \text{ is equilateral.}\right\}
\]
is at most countable, because $\overline{\T}^\varepsilon$ is countable. We pick $\mu_1 \in \left(-\frac{2^{-1}}{2\sqrt{3}}\varepsilon,\frac{2^{-1}}{2\sqrt{3}}\varepsilon\right) \setminus A$ and obtain $\overline{\T}_1^\varepsilon=\left(\begin{smallmatrix}1 & \mu_1 \\ 0 & 1 
\end{smallmatrix} \right)\overline{\T}^\varepsilon$. The choice of $\mu_1$ implies claims (b) and (d) for $n=1$.

Step of induction (construction of $\overline{\T}_n^\varepsilon$, $n \ge 2$): By Lemma~\ref{lem:kong-endlich} (b), the set
\[
B=\bigcup_{T \in \overline{\T}^\varepsilon,T' \in \overline{\T}_{1}^\varepsilon \cup \ldots \cup \overline{\T}_{n-1}^\varepsilon} \left\{\mu \in \mathbb{R} \mid
\left(\begin{smallmatrix}1 & \mu \\ 0 & 1 
\end{smallmatrix} \right)T \cong T' \right\}
\]
is at most countable, since $\overline{\T}^\varepsilon$ and $\overline{\T}_{1}^\varepsilon \cup \ldots \cup \overline{\T}_{n-1}^\varepsilon$ are countable. We fix some $\mu_n \in \left(-\frac{2^{-n}}{2\sqrt{3}}\varepsilon,\frac{2^{-n}}{2\sqrt{3}}\varepsilon\right) \setminus (A \cup B)$ this way defining $\overline{\T}_n^\varepsilon=\left(\begin{smallmatrix}1 & \mu_n \\ 0 & 1 
\end{smallmatrix} \right)\overline{\T}^\varepsilon$. We obtain the respective items of conditions (b) and (d) by $\mu_n \notin A$, and of (c) by $\mu_n \notin B$.
\end{proof}

\begin{proof}[Proof of Theorem~\ref{thm:triangles}]
A periodic vertex-to-vertex tiling $\T_{\text{periodic}}$ of the plane by equilateral triangles of edge length $2$ can be obtained by stacking copies of the tiling from Figure~\ref{fig:equilat_undistorted} under suitable vertical translations (or reflections through horizontal axes) above and below that tiling. Our modified tiling will be constructed similarly by stacking images of the tilings from Corollary~\ref{cor:Tn_eps}.

All tilings $\overline{\T}_n^\varepsilon=\left(\begin{smallmatrix}1 & \mu_n \\ 0 & 1 
\end{smallmatrix} \right)\overline{\T}^\varepsilon$ of $\overline{S}$ have the same mutual distances $a_{i+1}-a_i$ between adjacent vertices at the upper boundary $y=\sqrt{3}$ as $\overline{\T}^\varepsilon$. The same applies to the lower boundary $y=-\sqrt{3}$. The strip $\overline{S}$ is tiled by $\overline{\T}_1^\varepsilon$. Therefore the parallel strip $\overline{S}+\left(0,2\sqrt{3}\right)^T$ can be tiled by a suitable image $\varphi_2(\overline{\T}_2^\varepsilon)$ of $\overline{\T}_2^\varepsilon$ under a reflection through the horizontal coordinate axis and the translation $\left((\mu_1-\mu_2)\sqrt{3},2\sqrt{3}\right)^T$, so that $\overline{\T}_1^\varepsilon \cup \varphi_2(\overline{\T}_2^\varepsilon)$ is vertex-to-vertex. 
Similarly, we tile $\overline{S}+\left(0,-2\sqrt{3}\right)^T$ by a reflected and translated image $\varphi_{3}(\overline{\T}_{3}^\varepsilon)$ of $\overline{\T}_{3}^\varepsilon$, we tile $\overline{S}+\left(0,4\sqrt{3}\right)^T$ and $\overline{S}+\left(0,-4\sqrt{3}\right)^T$ by suitable translates $\varphi_4(\overline{\T}_4^\varepsilon)$ and $\varphi_{5}(\overline{\T}_{5}^\varepsilon)$ of $\overline{\T}_4^\varepsilon$ and $\overline{\T}_{5}^\varepsilon$ etc. This way we obtain the desired vertex-to-vertex tiling $\overline{\T}=\overline{\T}_1^\varepsilon \cup \varphi_2(\overline{\T}_2^\varepsilon) \cup \varphi_{3}(\overline{\T}_{3}^\varepsilon) \cup \ldots$ of $\mathbb{R}^2$, see Figure~\ref{fig:final}. The tiling $\overline{\T}$ consists of tiles of the same area $\sqrt{3}$, as $\overline{\T}_\varepsilon$ does by Corollary~\ref{cor:T_eps}. By Corollary~\ref{cor:Tn_eps} (b),(c),(d), the triangles of that tiling are mutually incongruent and not equilateral.

Finally, we shall see that $\overline{\T}$ is $2\varepsilon$-close to $\T_\text{periodic}$. By Corollary~\ref{cor:T_eps}, the absolute deviations of the second coordinates of vertices of $\overline{\T}^\varepsilon$ from those of $\T_\text{periodic}$ are smaller than $\varepsilon$. Neither the shear maps $\left(\begin{smallmatrix} 1 & \mu_n \\ 0 & 1 \end{smallmatrix}\right)$ nor the reflection through the horizontal coordinate axis do affect these deviations. Also the translations used for stacking the $\overline{\T}_n^\varepsilon$ keep the deviations of second coordinates unchanged. So the absolute deviations between second coordinates of $\overline{\T}$ and $\T_\text{periodic}$ are less than $\varepsilon$. Differences of the first coordinates of $\varphi_n(\overline{\T}_n^\varepsilon)=\varphi_n\left(\left(\begin{smallmatrix} 1 & \mu_n \\ 0 & 1 \end{smallmatrix}\right)\overline{\T}^\varepsilon\right)\subseteq \overline{\T}$ from the respective ones of $\T_\text{periodic}$ result from the differences between $\overline{\T}^\varepsilon$ and $\T_\text{periodic}$, which are smaller that $\varepsilon$ by Corollary~\ref{cor:T_eps}, from the horizontal shifts induced by the shear map $\left(\begin{smallmatrix} 1 & \mu_n \\ 0 & 1 \end{smallmatrix}\right)$, which are in $[-\sqrt{3}\mu_n,\sqrt{3}\mu_n]$, and the first coordinate of the translation vector of $\varphi_n$. If $\varphi_n(\overline{\T}_n^\varepsilon)$ and $\varphi_{n'}(\overline{\T}_{n'}^\varepsilon)$ tile consecutive strips $\overline{S}+\left(0,2k\sqrt{3}\right)^T$ and $\overline{S}+\left(0,2(k+1)\sqrt{3}\right)^T$, the respective translation vector between $\varphi_n(\overline{\T}_n^\varepsilon)$ and $\varphi_{n'}(\overline{\T}_{n'}^\varepsilon)$ is $\left(\pm(\mu_{n}-\mu_{n'})\sqrt{3},2\sqrt{3}\right)^T$, where the sign depends on whether $\varphi_{n'}$ or $\varphi_n$ involves a reflection. Adding the vectors up to get the translation vector from $\overline{\T}_1^\varepsilon$ to $\overline{\T}_n^\varepsilon$, we see that all absolute total deviations of first coordinates are smaller than
\[
\varepsilon+\sum_{n=1}^\infty 2\sqrt{3}|\mu_n| \stackrel{\text{(Corollary~\ref{cor:Tn_eps}(a))}}{<} 2\varepsilon.
\]
Consequently, $\overline{\T}$ is $2\varepsilon$-close to $\T_\text{periodic}$.

The proof of Theorem~\ref{thm:triangles} is complete.
\end{proof}


\section{Particular fair partitions of nearly equilateral triangles\label{sec:quadrangles}}

Figure~\ref{fig1} displays a partition of a triangle of edge lengths $a,b,c$ into three quadrangles. 
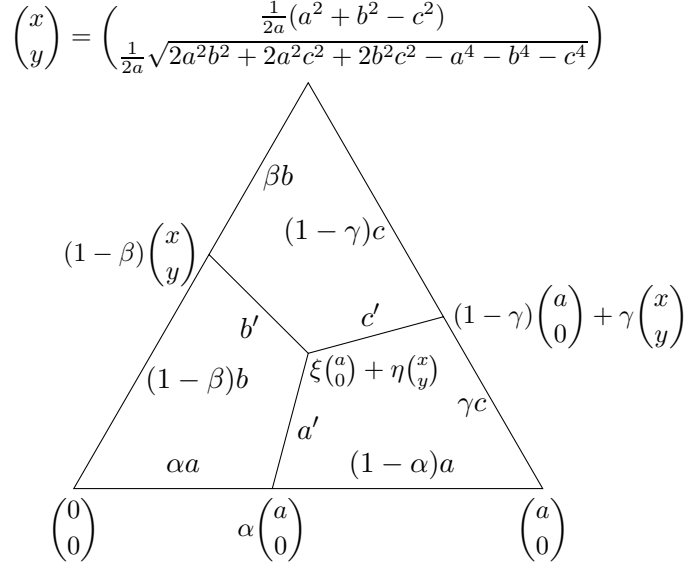
\begin{figure}
\begin{center}
\begin{tikzpicture}[xscale=1.039,yscale=.9]

\draw 
  (0,0)--(6,0)--(3,6)--cycle
  (2.54,0)--(3,2)--(1.73,3.46)
  (3,2)--(4.73,2.54)
  (0,0) node[below] {$\genfrac(){0pt}{0}{0}{0}$}
  (2.54,0) node[below] {$\alpha\genfrac(){0pt}{0}{a}{0}$}
  (6,0) node[below] {$\genfrac(){0pt}{0}{a}{0}$}
  (1.73,3.46) node[left] {$(1-\beta)\genfrac(){0pt}{0}{x}{y}$}
  (2.9,2.12) node[below right] {$\xi\genfrac(){0pt}{1}{a}{0}+\eta\genfrac(){0pt}{1}{x}{y}$}
  (4.73,2.54) node[right] {$(1-\gamma)\genfrac(){0pt}{0}{a}{0}+\gamma\genfrac(){0pt}{0}{x}{y}$}
  (3,6) node[above] { $\genfrac(){0pt}{0}{x}{y}=\genfrac(){0pt}{0}{\frac{1}{2a}(a^2+b^2-c^2)}{\frac{1}{2a}\sqrt{2a^2b^2+2a^2c^2+2b^2c^2-a^4-b^4-c^4}}$}
  (1.4,.1) node[above] {\large $\alpha a$}
  (4.2,0) node[above] {\large $(1-\alpha)a$}
  (.8,1.6) node[right] {\large $(1-\beta)b$}
  (2.3,4.6) node[right] {\large $\beta b$}
  (5.4,1.2) node[left] {\large $\gamma c$}
  (4.1,3.8) node[left] {\large $(1-\gamma)c$}
  (2.74,.9) node[right] {\large $a'$}
  (2.5,2.7) node[below left] {\large $b'$}
  (3.8,2.27) node[above] {\large $c'$}
  ;
\end{tikzpicture}
\end{center}
\caption{A dissection of a triangle into quadrangles.\label{fig1}}
\end{figure}
The independent parameters $a,b,c > 0$ (fulfilling the triangle inequalities), $\alpha,\beta,\gamma \in (0,1)$ and $\xi,\eta > 0$ with $\xi+\eta < 1$ describe the tiling up to congruence under Euclidean isometries. In the illustration the actual values are $a=b=c=1$, $\alpha=\beta=\gamma=1-\frac{\sqrt{3}}{3}$ and $\xi=\eta=\frac{1}{3}$. Then the quadrangles are congruent, the sizes of their inner angles are $\frac{\pi}{3}$, $\frac{7\pi}{12}$, $\frac{2\pi}{3}$ and $\frac{5\pi}{12}$ and their perimeter is $p_0=1+\sqrt{2}-\frac{\sqrt{6}}{3}$. 

Now we consider a perturbed situation.

\begin{lem}\label{lem:subdivision}
There exists $\delta > 0$ and a procedure of dissecting arbitrary triangles with edge lengths $a,b,c \in (1-\delta,1+\delta)$ into three convex quadrangles as in Figure~\ref{fig1} with the following properties: 
\begin{itemize}
\item[(i)]
The three quadrangles have the same perimeter
\[
p_0=1+\sqrt{2}-\frac{\sqrt{6}}{3}
\]
and the same area. 
\item[(ii)]
If the original triangle is not equilateral then the quadrangles are mutually incongruent.
\item[(iii)]
The shape of any of the quadrangles determines the shape of the original triangle (up to isometry).
\end{itemize}
\end{lem}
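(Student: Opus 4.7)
The plan is to solve the system imposing that the three quadrangles have perimeter $p_0$ and equal area by the implicit function theorem at the equilateral base point. Reading the vertex coordinates off Figure~\ref{fig1}, each perimeter $P_i$ and area $A_i$ is a smooth function of the nine scalars $(a,b,c,\alpha,\beta,\gamma,\xi,\eta)$. Writing $\mathbf{p}=(\alpha,\beta,\gamma,\xi,\eta)$, I set
\[
F(\mathbf{p};a,b,c) \;=\; (P_1-p_0,\; P_2-p_0,\; P_3-p_0,\; A_1-A_2,\; A_2-A_3).
\]
By the computation mentioned in the lemma's preamble, $F$ vanishes at $(a,b,c)=(1,1,1)$ for $\mathbf{p}_0=(1-\tfrac{\sqrt{3}}{3},1-\tfrac{\sqrt{3}}{3},1-\tfrac{\sqrt{3}}{3},\tfrac{1}{3},\tfrac{1}{3})$. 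The implicit function theorem, once applied, yields a smooth branch $\mathbf{p}=\mathbf{p}(a,b,c)$ near $(1,1,1)$ with $F\equiv 0$; this is the dissection procedure of the lemma and gives (i) directly.

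The main obstacle is the invertibility of the $5\times 5$ Jacobian $J=D_{\mathbf{p}}F$ at $(\mathbf{p}_0;1,1,1)$. I would exploit the $\mathbb{Z}/3$-rotational symmetry of the equilateral dissection: the cyclic relabelling $v_1\mapsto v_2\mapsto v_3\mapsto v_1$ of triangle vertices acts on the parameters by $(\alpha,\beta,\gamma)\mapsto(\gamma,\alpha,\beta)$ and $(\xi,\eta)\mapsto(1-\xi-\eta,\xi)$ (the latter being the transformation of the interior point's barycentric weights), permutes the five components of $F$ accordingly, and fixes the base point. Thus $J$ intertwines two $5$-dimensional representations of $\mathbb{Z}/3$; decomposing each into isotypic components block-diagonalises $J$ into smaller pieces whose determinants can be checked directly. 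Most of the explicit arithmetic of the proof is concentrated here.

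For (iii), with $\mathbf{p}(a,b,c)$ in hand, the map $\Phi_i\colon(a,b,c)\mapsto\mathrm{shape}(Q_i)$ (a point in the $5$-dimensional congruence class space of quadrangles) is smooth, and it suffices to exhibit three invariants of $Q_i$, say three of its side lengths, whose $3\times 3$ Jacobian in $(a,b,c)$ is invertible at $(1,1,1)$. Then $\Phi_i$ is a local embedding, hence injective, proving (iii). By the $\mathbb{Z}/3$-equivariance of $\mathbf{p}$, it suffices to do this for a single $i$.

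Property (ii) follows from (iii) combined with equivariance. Letting $\sigma$ denote the cyclic permutation $(a,b,c)\mapsto(c,a,b)$ induced by the vertex relabelling, equivariance of the construction gives $\Phi_i(T)=\Phi_1(\sigma^{k(i)}T)$ for some bijection $i\mapsto k(i)$ on $\mathbb{Z}/3$. If $Q_i(T)\cong Q_j(T)$ for some $i\neq j$, then $\Phi_1(\sigma^{k(i)}T)=\Phi_1(\sigma^{k(j)}T)$, and the local injectivity of $\Phi_1$ from (iii) forces $\sigma^{k(i)-k(j)}T=T$. Since $\sigma^{k(i)-k(j)}$ is a non-trivial element of $\mathbb{Z}/3$ whose fixed locus in $\mathbb{R}^3$ is exactly the diagonal $a=b=c$, the triangle $T$ must be equilateral — proving (ii) by contradiction.
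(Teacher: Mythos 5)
Your proposal is correct in outline and, for part (i), follows essentially the paper's route: an implicit-function-theorem argument at the equilateral configuration. The difference there is organisational: the paper first eliminates $\xi,\eta$ by solving the equal-area conditions in closed form (observing that equal area is an affinely invariant condition, so the formulas found for $a=b=c=1$ work for all triangles), which reduces the problem to a $3\times 3$ system in $(\alpha,\beta,\gamma)$ with Jacobian determinant $2\sqrt{2}+\sqrt{3}-2\sqrt{6}\ne 0$; you keep the full $5\times 5$ system. Your determinant is nonsingular exactly when the paper's is, since the $2\times 2$ block $\partial(A_1-A_2,A_2-A_3)/\partial(\xi,\eta)$ is invertible (that is what makes the paper's elimination possible) and a Schur complement reduces your $5\times 5$ determinant to the paper's $3\times 3$ one times that block; but either way this computation is the load-bearing step and you have only sketched a strategy for it, just as the paper delegates it to Maple. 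For (ii) and (iii) you take a genuinely different and arguably cleaner route. The paper proves (iii) by a second implicit-function argument recovering the parameters $(\varrho,\sigma,\tau)$ of the ambient triangle from the five coordinates of one quadrangle (determinant $\frac{\sqrt{6}}{48}-\frac{\sqrt{2}}{24}\ne 0$), and proves (ii) by a separate synthetic angle-and-edge chase showing $Q_1\cong Q_2$ forces all segments $MA',MB',MC'$ and the adjacent angles to coincide. You instead prove (iii) by local injectivity of the forward map $(a,b,c)\mapsto\mathrm{shape}(Q_1)$ and then deduce (ii) from (iii) together with $\mathbb{Z}/3$-equivariance of the implicit-function solution, which unifies the two parts nicely; the equivariance claim is legitimate because the defining system is invariant under cyclic relabelling and the base point is fixed, so local uniqueness of the solution forces equivariance. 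Two caveats: first, you replace the paper's explicit determinant for (iii) by another unverified non-degeneracy claim (that three chosen side lengths of $Q_1$ have invertible Jacobian in $(a,b,c)$ at $(1,1,1)$); the paper's proof of (iii) guarantees that \emph{some} three congruence invariants work, but not that your three do, so this must be computed. Second, for side lengths of $Q_i$ to serve as congruence invariants you must note (as the paper does explicitly in its proof of (ii)) that any congruence between two nearly undistorted quadrangles matches vertices in the unique way dictated by the four pairwise distinct limiting angles $\frac{\pi}{3},\frac{7\pi}{12},\frac{2\pi}{3},\frac{5\pi}{12}$; without this the labelling of "three of its side lengths" is not well defined. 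With those two computations supplied, your argument goes through.
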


\begin{proof}
For arbitrary values of $\alpha,\beta,\gamma$ sufficiently close to $1-\frac{\sqrt{3}}{3}$, the choice
\[
\xi=\frac{1-2\alpha-\gamma+3\alpha\gamma}{3(1-\alpha-\beta-\gamma+\alpha\beta+\alpha\gamma+\beta\gamma)},\quad
\eta=\frac{1-\beta-2\gamma+3\beta\gamma}{3(1-\alpha-\beta-\gamma+\alpha\beta+\alpha\gamma+\beta\gamma)}
\]
ensures that the three quadrangles from Figure~\ref{fig1} have the same area. (Indeed, this condition is invariant under affine transformations. Hence this is enough to be checked for the equilateral triangle with $a=b=c=1$.) It remains to show that there are $\alpha=\alpha(a,b,c)$, $\beta=\beta(a,b,c)$ and $\gamma=\gamma(a,b,c)$ close to $1-\frac{\sqrt{3}}{3}$ such that the perimeters of the quadrangles are $p_0$. This is equivalent to the three equations
\begin{align*}
\left(\alpha a+ a'+b'+(1-\beta)b\right)-p_0 &=0,\\
\left(\beta b+b'+c'+(1-\gamma)c\right)-p_0 &=0,\\
\left(\gamma c+ c'+a'+(1-\alpha)a\right)-p_0 &=0.
\end{align*}
Writing $a',b',c'$ in terms of $a,b,c,\alpha,\beta,\gamma,\xi,\eta$ and then $\xi,\eta$ in terms of $\alpha,\beta,\gamma$ as above, the equations amount to a non-linear system
\begin{align}
f_1(a,b,c,\alpha,\beta,\gamma)&=0,\nonumber\\
f_2(a,b,c,\alpha,\beta,\gamma)&=0,\label{eq:system}\\
f_3(a,b,c,\alpha,\beta,\gamma)&=0,\nonumber
\end{align} 
that has to be solved for $(\alpha,\beta,\gamma)$ in dependence of $(a,b,c)$. Since \eqref{eq:system} is satisfied for $a=b=c=1$, $\alpha=\beta=\gamma=1-\frac{\sqrt{3}}{3}$ (see above) and since
\[
\det \left.\left( 
\begin{array}{ccc}
\frac{\partial}{\partial \alpha}f_1 &
\frac{\partial}{\partial \beta}f_1 &
\frac{\partial}{\partial \gamma}f_1 \\
\frac{\partial}{\partial \alpha}f_2 &
\frac{\partial}{\partial \beta}f_2 &
\frac{\partial}{\partial \gamma}f_2 \\
\frac{\partial}{\partial \alpha}f_3 &
\frac{\partial}{\partial \beta}f_3 &
\frac{\partial}{\partial \gamma}f_3
\end{array}
\right)\right|_{a=b=c=1,\alpha=\beta=\gamma=1-\frac{\sqrt{3}}{3}}=2\sqrt{2}+\sqrt{3}-2\sqrt{6}\ne 0
\]
(as can be shown by a computer algebra system such as Maple 2019), the implicit function theorem shows that the system \eqref{eq:system} has a unique and continuous solution $(\alpha(a,b,c),\beta(a,b,c),\gamma(a,b,c))$ in some neighbourhood of $(a,b,c)=(1,1,1)$ such that $\alpha(1,1,1)=\beta(1,1,1)=\gamma(1,1,1)=1-\frac{\sqrt{3}}{3}$. This proves part (i).

For (ii), we use the notations from the left-hand part of Figure~\ref{fig2}.
\begin{figure}
\begin{center}
\begin{tikzpicture}[xscale=.53,yscale=.5]

\draw
  (-8,2.98) node
  {
  \begin{tikzpicture}[xscale=.53,yscale=.5]
  \draw 
  (0,0)--(6,0)--(3,6)--cycle
  (2.54,0)--(3,2)--(1.73,3.46)
  (3,2)--(4.73,2.54)
  (0,0) node[below] {$A$}
  (2.54,0) node[below] {$C'$}
  (6,0) node[below] {$B$}
  (1.73,3.46) node[left] {$B'$}
  (3.25,2.6) node {$M$}
  (4.7,2.7) node[right] {$A'$}
  (3,6) node[above] { $C$}
  (1.8,1.35) node {\large $Q_1$}
  (4,1.1) node {\large $Q_2$}
  (3,4) node {\large $Q_3$}
  ;
\end{tikzpicture}
  };
  
\fill[lightgray]
  (0,0)--(2.52,0)--(3,2)--(1.73,3.46)--cycle
  ;
\draw 
  (0,0)--(6,0)--(3,6)--cycle
  (2.54,0)--(3,2)--(1.73,3.46)
  (3,2)--(4.73,2.54)
  (0,0) node[below] {$\genfrac(){0pt}{0}{0}{0}$}
  (2.54,0) node[below] {$\genfrac(){0pt}{0}{\hat{a}}{0}$}
  (6,0) node[below] {$\varrho\genfrac(){0pt}{0}{\hat{a}}{0}$}
  (1.7,3.46) node[left] {$\genfrac(){0pt}{0}{\hat{x}}{\hat{y}}$}
  (2.7,2.2) node[below right] {$\genfrac(){0pt}{0}{\hat{z}}{\hat{w}}$}
  (4.73,2.54) node[right] {$(1-\tau)\varrho\genfrac(){0pt}{0}{\hat{a}}{0}+\tau\sigma\genfrac(){0pt}{0}{\hat{x}}{\hat{y}}$}
  (3,6) node[above] { $\sigma\genfrac(){0pt}{0}{\hat{x}}{\hat{y}}$}
  ;
\end{tikzpicture}
\end{center}
\caption{Notations for the proofs of (ii) (on the left) and (iii) (on the right).\label{fig2}}
\end{figure}
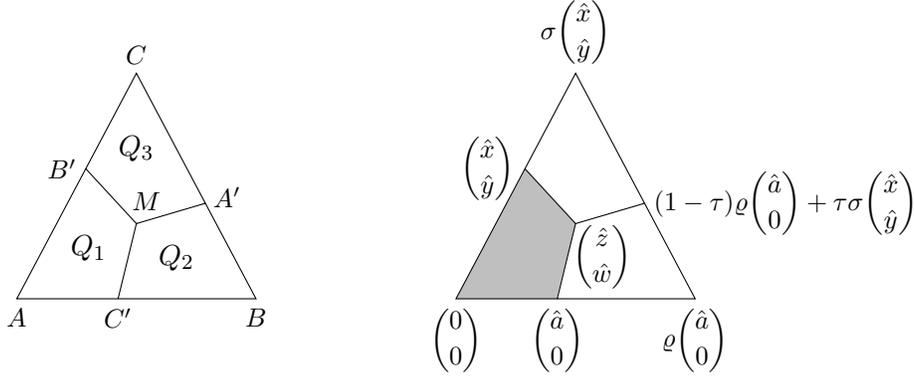
We suppose that the quadrangles $Q_1$ and $Q_2$ are congruent and have to show that the underlying triangle is equilateral. Since our dissection is close to the undistorted one (if $\delta$ is chosen sufficiently small), we know that the angles of $Q_1$ at $A$, $C'$, $M$ and $B'$ must correspond to the angles of $Q_2$ at $B$, $A'$, $M$ and $C'$, respectively. We obtain
$MB' \cong MC' \cong MA'$
and, using pairs of supplementary angles at $A'$, $B'$ and $C'$,
$|\angle(AC'M)|=|\angle(BA'M)|=|\angle(CB'M)|=\pi-|\angle(AB'M)|=\pi-|\angle(BC'M)|=\pi-|\angle(CA'M)|$.
So the quadrangles $Q_3$ and $Q_1$ are related by 
\[
A'M \cong B'M,\quad B'M \cong C'M,\quad 
\angle(CA'M) \cong \angle(AB'M),\quad \angle(CB'M) \cong \angle(AC'M).
\]
Taking into account that $Q_1$ and $Q_3$ both have the same perimeter $p_0$, we see that $Q_3 \cong Q_1$ as well. So all inner angles of the underlying triangle are congruent. This proves (ii).

For (iii), note that the quadrangles correspond to a small perturbation of Figure~\ref{fig1} with $a=b=c=1$ and $\alpha=\beta=\gamma=1-\frac{\sqrt{3}}{3}$. Such a quadrangle is represented up to congruence by the vertices $(0,0)^T$, $(\hat{a},0)^T\approx(\hat{a}_0,0)^T$, $(\hat{z},\hat{w})^T\approx(\hat{z}_0,\hat{w}_0)^T$ and $(\hat{x},\hat{y})^T\approx(\hat{x}_0,\hat{y}_0)^T$ (see the grey quadrangle in the right-hand part of Figure~\ref{fig2}),
where
\[
\hat{a}_0=1-\frac{\sqrt{3}}{3},\quad
\hat{x}_0=\frac{\sqrt{3}}{6},\quad
\hat{y}_0=\frac{1}{2},\quad
\hat{z}_0=\frac{1}{2},\quad
\hat{w}_0=\frac{\sqrt{3}}{6}
\]
stand for the undistorted situation.
Since we consider triangles close to equilateral triangles, we know 
that the smallest inner angle of our quadrangle is one of the dissected triangle, we can assume that the triangle and the other two quadrangles are positioned as in Figure~\ref{fig2}. Now it is enough to show that the parameters $\varrho$ and $\sigma$ are uniquely determined by the given coordinates $\hat{a},\hat{x},\hat{y},\hat{z},\hat{w}$ (provided that they are sufficiently close to $\hat{a}_0,\hat{x}_0,\hat{y}_0,\hat{z}_0,\hat{w}_0$), because this shows that the shape of our quadrangle determines the dissected triangle up to congruence. 

Note that the parameters $\varrho,\sigma,\tau$ in the undistorted situation are
\[
\varrho_0=\frac{1}{1-\frac{\sqrt{3}}{3}}=\frac{3+\sqrt{3}}{2}, \quad \sigma_0=\frac{1}{1-\left(1-\frac{\sqrt{3}}{3}\right)}=\sqrt{3}, \quad
\tau_0=1-\frac{\sqrt{3}}{3}.
\]
The area of the grey quadrangle is
\[
\text{area}_1=\frac{1}{2}\det\left(\genfrac(){0pt}{0}{\hat{a}}{0}-\genfrac(){0pt}{0}{0}{0},
\genfrac(){0pt}{0}{\hat{x}}{\hat{y}}-\genfrac(){0pt}{0}{0}{0}\right)
+\frac{1}{2}\det\left(\genfrac(){0pt}{0}{\hat{x}}{\hat{y}}-\genfrac(){0pt}{0}{\hat{z}}{\hat{w}},
\genfrac(){0pt}{0}{\hat{a}}{0}-\genfrac(){0pt}{0}{\hat{z}}{\hat{w}}\right),
\]
and the areas of the quadrangles at the vertices $\varrho\genfrac(){0pt}{1}{\hat{a}}{0}$ and 
$\sigma\genfrac(){0pt}{1}{\hat{x}}{\hat{y}}$ are
\begin{align*}
\text{area}_2=\frac{1}{2}\det&\left(\left((1-\tau)\varrho\genfrac(){0pt}{0}{\hat{a}}{0}+\tau\sigma\genfrac(){0pt}{0}{\hat{x}}{\hat{y}}\right)-\varrho\genfrac(){0pt}{0}{\hat{a}}{0},
\genfrac(){0pt}{0}{\hat{a}}{0}-\varrho\genfrac(){0pt}{0}{\hat{a}}{0}\right)\\
&+\frac{1}{2}\det\left(\genfrac(){0pt}{0}{\hat{a}}{0}-\genfrac(){0pt}{0}{\hat{z}}{\hat{w}},
\left((1-\tau)\varrho\genfrac(){0pt}{0}{\hat{a}}{0}+\tau\sigma\genfrac(){0pt}{0}{\hat{x}}{\hat{y}}\right)-\genfrac(){0pt}{0}{\hat{z}}{\hat{w}}\right),\\
\text{area}_3=\frac{1}{2}\det&\left(\genfrac(){0pt}{0}{\hat{x}}{\hat{y}}-\sigma\genfrac(){0pt}{0}{\hat{x}}{\hat{y}},\left((1-\tau)\varrho\genfrac(){0pt}{0}{\hat{a}}{0}+\tau\sigma\genfrac(){0pt}{0}{\hat{x}}{\hat{y}}\right)-\sigma\genfrac(){0pt}{0}{\hat{x}}{\hat{y}}\right)\\
&+\frac{1}{2}\det\left(\left((1-\tau)\varrho\genfrac(){0pt}{0}{\hat{a}}{0}+\tau\sigma\genfrac(){0pt}{0}{\hat{x}}{\hat{y}}\right)-\genfrac(){0pt}{0}{\hat{z}}{\hat{w}},
\genfrac(){0pt}{0}{\hat{x}}{\hat{y}}-\genfrac(){0pt}{0}{\hat{z}}{\hat{w}}\right).
\end{align*}
Since the quadrangles have equal area, the functions
\begin{align*}
&\hat{f}_1(\hat{a},\hat{x},\hat{y},\hat{z},\hat{w},\varrho,\sigma,\tau)=\text{area}_1-\text{area}_2,\\
&\hat{f}_2(\hat{a},\hat{x},\hat{y},\hat{z},\hat{w},\varrho,\sigma,\tau)=\text{area}_1-\text{area}_3
\end{align*} 
satisfy
\[
\hat{f}_1(\hat{a},\hat{x},\hat{y},\hat{z},\hat{w},\varrho,\sigma,\tau)=\hat{f}_2(\hat{a},\hat{x},\hat{y},\hat{z},\hat{w},\varrho,\sigma,\tau)=0.
\]
Moreover, the construction of the quadrangles shows in particular that the perimeter of the second one is $p_0$. That is, the function
\begin{align*}
\hat{f}_3(\hat{a},\hat{x},\hat{y},\hat{z},\hat{w},\varrho,\sigma,\tau)=
\left\|\left((1-\tau)\varrho\genfrac(){0pt}{0}{\hat{a}}{0}+\tau\sigma\genfrac(){0pt}{0}{\hat{x}}{\hat{y}}\right)-\varrho\genfrac(){0pt}{0}{\hat{a}}{0}\right\|+\left\|\genfrac(){0pt}{0}{\hat{a}}{0}-\varrho\genfrac(){0pt}{0}{\hat{a}}{0}\right\|&\\
+\left\| \genfrac(){0pt}{0}{\hat{a}}{0}-\genfrac(){0pt}{0}{\hat{z}}{\hat{w}}\right\|+\left\|\left((1-\tau)\varrho\genfrac(){0pt}{0}{\hat{a}}{0}+\tau\sigma\genfrac(){0pt}{0}{\hat{x}}{\hat{y}}\right)-\genfrac(){0pt}{0}{\hat{z}}{\hat{w}}\right\|-p_0&
\end{align*}
satisfies
\[
\hat{f}_3(\hat{a},\hat{x},\hat{y},\hat{z},\hat{w},\varrho,\sigma,\tau)=0
\]
as well.

Now we apply the implicit function theorem: since $(\hat{a},\hat{x},\hat{y},\hat{z},\hat{w},\rho,\sigma,\tau)$ is sufficiently close to $(\hat{a}_0,\hat{x}_0,\hat{y}_0,\hat{z}_0,\hat{w}_0,\rho_0,\sigma_0,\tau_0)$ (by our construction of the dissection, provided $\delta$ is chosen sufficiently small), since
\begin{align*}
&\hat{f}_1(\hat{a}_0,\hat{x}_0,\hat{y}_0,\hat{z}_0,\hat{w}_0,\varrho_0,\sigma_0,\tau_0)=0,\\
&\hat{f}_2(\hat{a}_0,\hat{x}_0,\hat{y}_0,\hat{z}_0,\hat{w}_0,\varrho_0,\sigma_0,\tau_0)=0,\\
&\hat{f}_3(\hat{a}_0,\hat{x}_0,\hat{y}_0,\hat{z}_0,\hat{w}_0,\varrho_0,\sigma_0,\tau_0)=0
\end{align*}
(as can easily be seen) and since
\[
\det \left.\left( 
\begin{array}{ccc}
\frac{\partial}{\partial \varrho}\hat{f}_1 &
\frac{\partial}{\partial \sigma}\hat{f}_1 &
\frac{\partial}{\partial \tau}\hat{f}_1 \\
\frac{\partial}{\partial \varrho}\hat{f}_2 &
\frac{\partial}{\partial \sigma}\hat{f}_2 &
\frac{\partial}{\partial \tau}\hat{f}_2 \\
\frac{\partial}{\partial \varrho}\hat{f}_3 &
\frac{\partial}{\partial \sigma}\hat{f}_3 &
\frac{\partial}{\partial \tau}\hat{f}_3
\end{array}
\right)\right|_{\genfrac{}{}{0pt}{1}{(\hat{a},\hat{x},\hat{y},\hat{z},\hat{w},\varrho,\sigma,\tau)=}{(\hat{a}_0,\hat{x}_0,\hat{y}_0,\hat{z}_0,\hat{w}_0,\varrho_0,\sigma_0,\tau_0)}}=\frac{\sqrt{6}}{48}-\frac{\sqrt{2}}{24}\ne 0
\]
(as can be shown by a computer algebra system such as Maple 2019), the
system
\begin{align*}
\hat{f}_1(\hat{a},\hat{x},\hat{y},\hat{z},\hat{w},\varrho,\sigma,\tau)=0&,\\
\hat{f}_2(\hat{a},\hat{x},\hat{y},\hat{z},\hat{w},\varrho,\sigma,\tau)=0&,\\
\hat{f}_3(\hat{a},\hat{x},\hat{y},\hat{z},\hat{w},\varrho,\sigma,\tau)=0&
\end{align*}
has a unique solution for $(\varrho,\sigma,\tau)$ depending on $(\hat{a},\hat{x},\hat{y},\hat{z},\hat{w})$. In particular, $\varrho$ and $\sigma$ are uniquely determined by $\hat{a}$, $\hat{x}$, $\hat{y}$, $\hat{z}$ and $\hat{w}$. This completes the proof of (iii).
\end{proof}

\begin{proof}[Proof of Theorem~\ref{thm:quadrangles}]
Let $\delta$ be as in Lemma~\ref{lem:subdivision}. By Theorem~\ref{thm:triangles}, there is a dissection of the plane into mutually incongruent and non-equilateral triangles of equal area with edge lengths in $(1-\delta,1+\delta)$. We apply Lemma~\ref{lem:subdivision} to all these triangles. By Lemma~\ref{lem:subdivision}(i) this yields a dissection of the plane into convex quadrangles of equal area and equal perimeter. By Lemma~\ref{lem:subdivision}(ii), two quadrangles are incongruent if they originate from the same triangle, since the triangles are non-equilateral. By Lemma~\ref{lem:subdivision}(iii), two quadrangles are incongruent if they originate from different triangles, since the triangles are incongruent.
\end{proof}

\begin{figure}
\begin{center}
\begin{tikzpicture}[xscale=.346,yscale=.3]

\begin{scope}
  \clip (1.5,0) rectangle (28.5,24)
  ;
  \begin{scope}[cm={1,0,0,1,(0,0)}]
    \draw 
      (0,0)--(6,0)--(3,6)--cycle
      (2.54,0)--(3,2)--(1.73,3.46)
      (3,2)--(4.73,2.54)
      (4.27,3.46)--(6,4)--(7.27,2.54)
      (6,4)--(6.46,6)
      ;
  \end{scope}
  \begin{scope}[cm={1,0,0,1,(6,0)}]
    \draw 
      (0,0)--(6,0)--(3,6)--cycle
      (2.54,0)--(3,2)--(1.73,3.46)
      (3,2)--(4.73,2.54)
      (4.27,3.46)--(6,4)--(7.27,2.54)
      (6,4)--(6.46,6)
      ;
  \end{scope}
  \begin{scope}[cm={1,0,0,1,(12,0)}]
    \draw 
      (0,0)--(6,0)--(3,6)--cycle
      (2.54,0)--(3,2)--(1.73,3.46)
      (3,2)--(4.73,2.54)
      (4.27,3.46)--(6,4)--(7.27,2.54)
      (6,4)--(6.46,6)
      ;
  \end{scope}
  \begin{scope}[cm={1,0,0,1,(18,0)}]
    \draw 
      (0,0)--(6,0)--(3,6)--cycle
      (2.54,0)--(3,2)--(1.73,3.46)
      (3,2)--(4.73,2.54)
      (4.27,3.46)--(6,4)--(7.27,2.54)
      (6,4)--(6.46,6)
      ;
  \end{scope}
  \begin{scope}[cm={1,0,0,1,(24,0)}]
    \draw 
      (0,0)--(6,0)--(3,6)--cycle
      (2.54,0)--(3,2)--(1.73,3.46)
      (3,2)--(4.73,2.54)
      (4.27,3.46)--(6,4)--(7.27,2.54)
      (6,4)--(6.46,6)
      ;
  \end{scope}
  \begin{scope}[cm={1,0,0,1,(-3,6)}]
    \draw 
      (0,0)--(6,0)--(3,6)--cycle
      (2.54,0)--(3,2)--(1.73,3.46)
      (3,2)--(4.73,2.54)
      (4.27,3.46)--(6,4)--(7.27,2.54)
      (6,4)--(6.46,6)
      ;
  \end{scope}
  \begin{scope}[cm={1,0,0,1,(3,6)}]
    \draw 
      (0,0)--(6,0)--(3,6)--cycle
      (2.54,0)--(3,2)--(1.73,3.46)
      (3,2)--(4.73,2.54)
      (4.27,3.46)--(6,4)--(7.27,2.54)
      (6,4)--(6.46,6)
      ;
  \end{scope}
  \begin{scope}[cm={1,0,0,1,(9,6)}]
    \draw 
      (0,0)--(6,0)--(3,6)--cycle
      (2.54,0)--(3,2)--(1.73,3.46)
      (3,2)--(4.73,2.54)
      (4.27,3.46)--(6,4)--(7.27,2.54)
      (6,4)--(6.46,6)
      ;
  \end{scope}
  \begin{scope}[cm={1,0,0,1,(15,6)}]
    \draw 
      (0,0)--(6,0)--(3,6)--cycle
      (2.54,0)--(3,2)--(1.73,3.46)
      (3,2)--(4.73,2.54)
      (4.27,3.46)--(6,4)--(7.27,2.54)
      (6,4)--(6.46,6)
      ;
  \end{scope}
  \begin{scope}[cm={1,0,0,1,(21,6)}]
    \draw 
      (0,0)--(6,0)--(3,6)--cycle
      (2.54,0)--(3,2)--(1.73,3.46)
      (3,2)--(4.73,2.54)
      (4.27,3.46)--(6,4)--(7.27,2.54)
      (6,4)--(6.46,6)
      ;
  \end{scope}
  \begin{scope}[cm={1,0,0,1,(27,6)}]
    \draw 
      (0,0)--(6,0)--(3,6)--cycle
      (2.54,0)--(3,2)--(1.73,3.46)
      (3,2)--(4.73,2.54)
      (4.27,3.46)--(6,4)--(7.27,2.54)
      (6,4)--(6.46,6)
      ;
  \end{scope}
  \begin{scope}[cm={1,0,0,1,(0,12)}]
    \draw 
      (0,0)--(6,0)--(3,6)--cycle
      (2.54,0)--(3,2)--(1.73,3.46)
      (3,2)--(4.73,2.54)
      (4.27,3.46)--(6,4)--(7.27,2.54)
      (6,4)--(6.46,6)
      ;
  \end{scope}
  \begin{scope}[cm={1,0,0,1,(6,12)}]
    \draw 
      (0,0)--(6,0)--(3,6)--cycle
      (2.54,0)--(3,2)--(1.73,3.46)
      (3,2)--(4.73,2.54)
      (4.27,3.46)--(6,4)--(7.27,2.54)
      (6,4)--(6.46,6)
      ;
  \end{scope}
  \begin{scope}[cm={1,0,0,1,(12,12)}]
    \draw 
      (0,0)--(6,0)--(3,6)--cycle
      (2.54,0)--(3,2)--(1.73,3.46)
      (3,2)--(4.73,2.54)
      (4.27,3.46)--(6,4)--(7.27,2.54)
      (6,4)--(6.46,6)
      ;
  \end{scope}
  \begin{scope}[cm={1,0,0,1,(18,12)}]
    \draw 
      (0,0)--(6,0)--(3,6)--cycle
      (2.54,0)--(3,2)--(1.73,3.46)
      (3,2)--(4.73,2.54)
      (4.27,3.46)--(6,4)--(7.27,2.54)
      (6,4)--(6.46,6)
      ;
  \end{scope}
  \begin{scope}[cm={1,0,0,1,(24,12)}]
    \draw 
      (0,0)--(6,0)--(3,6)--cycle
      (2.54,0)--(3,2)--(1.73,3.46)
      (3,2)--(4.73,2.54)
      (4.27,3.46)--(6,4)--(7.27,2.54)
      (6,4)--(6.46,6)
      ;
  \end{scope}
  \begin{scope}[cm={1,0,0,1,(-3,18)}]
    \draw 
      (0,0)--(6,0)--(3,6)--cycle
      (2.54,0)--(3,2)--(1.73,3.46)
      (3,2)--(4.73,2.54)
      (4.27,3.46)--(6,4)--(7.27,2.54)
      (6,4)--(6.46,6)
      ;
  \end{scope}
  \begin{scope}[cm={1,0,0,1,(3,18)}]
    \draw 
      (0,0)--(6,0)--(3,6)--cycle
      (2.54,0)--(3,2)--(1.73,3.46)
      (3,2)--(4.73,2.54)
      (4.27,3.46)--(6,4)--(7.27,2.54)
      (6,4)--(6.46,6)
      ;
  \end{scope}
  \begin{scope}[cm={1,0,0,1,(9,18)}]
    \draw 
      (0,0)--(6,0)--(3,6)--cycle
      (2.54,0)--(3,2)--(1.73,3.46)
      (3,2)--(4.73,2.54)
      (4.27,3.46)--(6,4)--(7.27,2.54)
      (6,4)--(6.46,6)
      ;
  \end{scope}
  \begin{scope}[cm={1,0,0,1,(15,18)}]
    \draw 
      (0,0)--(6,0)--(3,6)--cycle
      (2.54,0)--(3,2)--(1.73,3.46)
      (3,2)--(4.73,2.54)
      (4.27,3.46)--(6,4)--(7.27,2.54)
      (6,4)--(6.46,6)
      ;
  \end{scope}
  \begin{scope}[cm={1,0,0,1,(21,18)}]
    \draw 
      (0,0)--(6,0)--(3,6)--cycle
      (2.54,0)--(3,2)--(1.73,3.46)
      (3,2)--(4.73,2.54)
      (4.27,3.46)--(6,4)--(7.27,2.54)
      (6,4)--(6.46,6)
      ;
  \end{scope}
  \begin{scope}[cm={1,0,0,1,(27,18)}]
    \draw 
      (0,0)--(6,0)--(3,6)--cycle
      (2.54,0)--(3,2)--(1.73,3.46)
      (3,2)--(4.73,2.54)
      (4.27,3.46)--(6,4)--(7.27,2.54)
      (6,4)--(6.46,6)
      ;
  \end{scope}
  \draw
    (0,24)--(30,24)
    ;
\end{scope}  
\end{tikzpicture}
\end{center}

\caption{The resulting tiling is close to a periodic tiling by quadrangles.
\label{fig:final_quadrangles}} 
\end{figure}
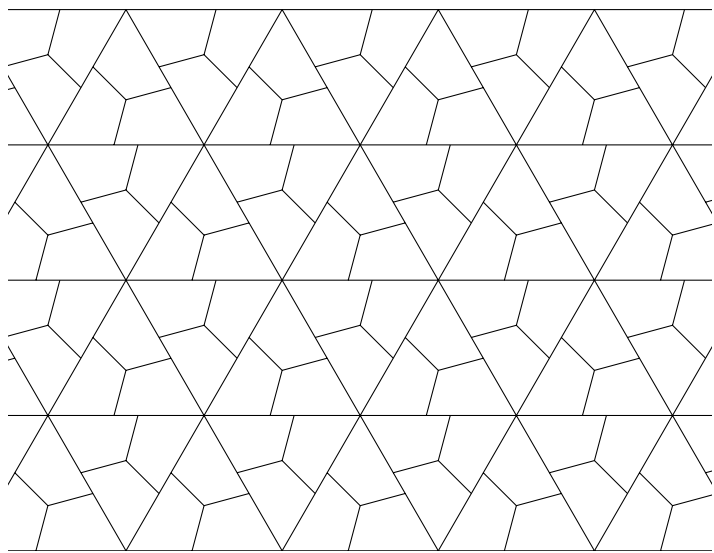

\begin{rem}
Our construction gives a tiling that is close to the periodic tiling illustrated in Figure~\ref{fig:final_quadrangles}. It is not vertex-to-vertex. It remains open if Theorem~\ref{thm:quadrangles} can be sharpened in this direction.
\end{rem}


\section*{Acknowledgments}
Both authors express their gratitude to R.\ Nandakumar for providing 
several interesting problems.



\end{document}